\begin{document}
\newtheorem{theorem}{Theorem}
\newtheorem{lemma}{Lemma}
\newtheorem{prop}{Proposition}
\newtheorem{corollary}{Corollary}
\newtheorem{conjecture}{Conjecture}
\numberwithin{equation}{section}
\newcommand{\dif}{\mathrm{d}}
\newcommand{\intz}{\mathbb{Z}}
\newcommand{\ratq}{\mathbb{Q}}
\newcommand{\natn}{\mathbb{N}}
\newcommand{\comc}{\mathbb{C}}
\newcommand{\rear}{\mathbb{R}} 
\newcommand{\prip}{\mathbb{P}}
\newcommand{\uph}{\mathbb{H}}
\newcommand{\logl}{\mathcal{L}}

\title{\bf Gaps of Smallest Possible Order between Primes in an Arithmetic Progression}
\author{Roger C. Baker and Liangyi Zhao}
\date{\today}
\maketitle

\begin{abstract}
Let $t \geq 1$.  Suppose that $x$ is a sufficiently large real number and $q$ is a natural number with $q = x^{\theta}$,
\[ \theta \leq \frac{5}{12} - \eta, \; \prod_{p|q} p \leq \left( \log x \right)^C, \]
where $\eta$ and $C$ are positive constants.  Let $(a,q)=1$.  Then there are primes $p_1 < \cdots < p_t$ congruent to $a$ modulo $q$ in $( x/2, x]$ with
\[ p_t - p_1 \ll q \exp \left( Bt \right) \]
whenever $B > \frac{40}{9 - 20 \theta}$.
\end{abstract}

\noindent Key words and Phrases: GPY sieve, primes in arithmetic progressions, large values of Dirichlet polynomials, zeros of Dirichlet $L$-functions \newline

\noindent 2010 Mathematics Subject Classification: 11N13

\section{Introduction}

Let $t \in \natn$ and $0 \leq \eta < 1$ be given.  Suppose that $x$ is a large positive real number, and that $q \in \natn$ and $(a,q) = 1$, $q \leq x^{1-\eta}$.  Set
\[ \mathcal{A} = \{ n \in (x/2, x] : n \equiv a \pmod{q} \}. \]
It may be conjectured that there are primes $p_1 < p_2 < \cdots <  p_t$ in $\mathcal{A}$ with
\begin{equation} \label{boundgapres}
p_t - p_1 \ll_t q.
\end{equation}
J. Maynard \cite{May} has recently refined the Goldston-Pintz-Y{\i}ldr{\i}m sieve to prove this in the case of $q=1$, showing that
\[ p_t -p_1 \ll t^3 \exp(4t) . \]

In this paper, we prove \eqref{boundgapres} for $q$'s in a restricted class.  Namely, we suppose that for some positive constant $\eta$ and $C$, we have $q = x^{\theta}$,
\begin{equation} \label{thetaccond}
\theta \leq \frac{5}{12} - \eta, \; \prod_{p|q} p \leq \left( \log x \right)^C .
\end{equation}
Before stating our results, we address the question of where the hypotheses in \eqref{thetaccond} come from.  There are many striking results on the existence of primes in arithmetic progressions.  D. R. Heath-Brown \cite{HB3} has shown that for any $q$ and $(a,q)=1$, the least prime $p(q,a)$ congruent to $a \pmod{q}$ satisfies
\[ p(q,a) \ll q^{5.5} . \]
G. Harman \cite{Harman} has shown, subject to a weak hypothesis on the zeros of $L(s,\chi)$ for characters $\chi \pmod{q}$, that
\[ \pi (x; q,a) \gg \frac{x}{\varphi(q) \log x} \; \mbox{for} \; q < x^{0.4736} \; \mbox{and} \; (a,q)=1. \]
(As usual,
\[ \pi(x; q,a) = \sum_{\substack{ p \leq x \\ p \equiv a \bmod{q}}} 1 \; \; \; \mbox{and} \; \; \; \psi(x;q,a) = \sum_{\substack{n \leq x \\ n \equiv a \bmod{q}}} \Lambda(n) . ) \]

However, in the present paper we need a result of Bombieri-Vinogradov type in order to employ Maynard's method.  For a positive constant $b$, let
\[ E_b (x,q) = \sum_{\substack{d \leq x^b \\ (d,q)=1}} \max_{(a,qd)=1} \left| \psi (x; qd, a) - \frac{x}{\varphi(qd)} \right| . \]
We shall require a large logarithm power saving over the trivial bound for $E_b(x,q)$.  Elliott \cite{El2} achieves this for
\[ q \leq x^{1/3} \exp \left( - (\log \logl)^3 \right) , \; q = c^n \]
where $c$ is a given natural number.  Here and below, $\logl : = \log x$.  We weaken these restrictions, replacing them by \eqref{thetaccond}.  To do better, we would respectively need improvements of a Huxley-Jutila zero density theorem \cites{Hux, Jut} and Siegel's theorem \cite[Chapter 21]{HD}. \newline

One of our key tools, the bounds of H. Iwaniec \cite{HI6} on $L$-functions, has been improved for certain ranges by M.-C. Chang \cite{Cha}.  This would not help us with $E_b(x,q)$, but see a recent paper by Banks, Freiberg and Maynard \cite{BanFreMay} for a closely related sum which requires Chang's work. \newline

In the sequel, let $\varepsilon $ denote a positive constant sufficiently small in terms of $\eta$ and $C$. 

\begin{theorem} \label{bomvino}
Suppose that \eqref{thetaccond} holds.  Let
\[ L (\theta) = \left\{ \begin{array} {ll} 1/2 - \theta - \varepsilon & \mbox{if} \; \theta < 2/5 - \varepsilon , \\ &  \\ 9/20 - \theta - \varepsilon & \mbox{otherwise}. \end{array} \right. \]
Then for $A>0$,
\[ E_{L(\theta)} (x,q) \ll \frac{x}{\varphi(q) \logl^A} . \]
The implied constant depends on $C$, $\theta$, $\varepsilon$ and $A$.
\end{theorem}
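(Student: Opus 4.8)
The plan is to convert the sum over $d$ into a sum over Dirichlet characters modulo $qd$, reduce to primitive characters, apply the truncated explicit formula, and bound the resulting sum over zeros by a zero-density estimate of Huxley--Jutila type; the two hypotheses in \eqref{thetaccond} enter separately, the restriction $\theta\le 5/12-\eta$ through the density estimate and the smoothness condition $\prod_{p\mid q}p\le(\log x)^{C}$ through the treatment of a possible Siegel zero.

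I would first dispose of the maximum over $a$ by the standard inequality $\max_{(a,qd)=1}|\psi(x;qd,a)-x/\varphi(qd)|\le \varphi(qd)^{-1}\sum_{\chi\neq\chi_0}|\psi(x,\chi)|$, up to an admissible error coming from $\chi_0$ (Siegel--Walfisz is not needed here since $qd\ge x^{\theta}$) and from the prime powers dividing $qd$. Writing each non-principal $\chi\bmod qd$ in terms of the primitive character $\chi^{*}\bmod r$ that induces it, and noting that $(d,q)=1$ forces a factorization $r=g_1g_2$ with $g_1\mid q$ (hence $\prod_{p\mid g_1}p\le(\log x)^{C}$, so $g_1$ runs over only $(\log x)^{o(1)}$ values) and $g_2\mid d$, $(g_2,q)=1$, the weights $\varphi(qd)^{-1}$ summed over the eligible $d\le D:=x^{L(\theta)}$ collapse to $\ll (\log x)\,\varphi(q)^{-1}\varphi(g_2)^{-1}$. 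Thus it suffices to prove, for every $B>0$,
\[
\sum_{g_1\mid q}\ \sideset{}{^*}\sum_{\chi_1\bmod g_1}\ \sum_{\substack{g_2\le D\\(g_2,q)=1}}\frac{1}{\varphi(g_2)}\ \sideset{}{^*}\sum_{\chi_2\bmod g_2}\bigl|\psi(x,\chi_1\chi_2)\bigr|\ \ll\ \frac{x}{(\log x)^{B}},
\]
the characters $\chi_1\chi_2$ being primitive of conductor $g_1g_2\le qD=x^{\theta+L(\theta)}$.

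Next I would insert the truncated explicit formula $\psi(x,\psi)=-\sum_{|\gamma|\le T}x^{\rho}/\rho+O\bigl(xT^{-1}(\log qxT)^{2}\bigr)$ for each primitive $\psi=\chi_1\chi_2\neq\chi_0$, with $T$ a suitable fixed power of $x$ so that the error, summed against the weights, is negligible. A possible Siegel zero of the unique exceptional real character $\tilde\psi$ is split off: since a real primitive character has an essentially squarefree conductor, the $q$-part $g_1$ of that conductor is $\ll(\log x)^{C}$, so if the conductor is $\le(\log x)^{B_0}$ then Siegel's theorem (with $\varepsilon$ small in terms of $B,B_0,C$) gives $x^{\tilde\beta}\ll x\exp(-c(\log x)^{1-\varepsilon'})$, beating every power of $\log x$; if the conductor exceeds $(\log x)^{B_0}$ then $g_2$ is correspondingly large and the weight $\varphi(g_2)^{-1}$ alone supplies the saving, using only the effective Page--Landau bound on $\tilde\beta$. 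For the remaining zeros one has the classical zero-free region, and a dyadic partial summation in $\sigma$ and in $|\gamma|$ reduces matters to bounding
\[
(\log x)^{2}\int_{1/2}^{1}x^{\sigma}\Bigl(\sum_{g_1\mid q}\ \sideset{}{^*}\sum_{\chi_1\bmod g_1}\ \sum_{g_2\le D}\frac{1}{\varphi(g_2)}\ \sideset{}{^*}\sum_{\chi_2\bmod g_2}N(\sigma,T,\chi_1\chi_2)\Bigr)\,\dif\sigma ,
\]
where $N(\sigma,T,\cdot)$ counts zeros with real part $\ge\sigma$ and $|\gamma|\le T$. Here one applies the Huxley--Jutila zero-density estimate, resting on large-values estimates for Dirichlet polynomials and on Iwaniec's bounds for $L(s,\chi)$: the family $\{\chi_1\chi_2\}$ has conductor as large as $x^{\theta+L(\theta)}$, but is organized so that the $q$-direction contributes only $(\log x)^{o(1)}$ moduli while the $d$-direction carries its natural $\varphi(g_2)^{-1}$ weight. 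Inserting the density bound, the $\sigma$-integral is governed by its behaviour near $\sigma=1/2$ and near $\sigma=1$; optimizing $T$ and balancing these two contributions is exactly what forces the level $\theta+L(\theta)\le 1/2-\varepsilon$ when $\theta<2/5-\varepsilon$ and the level $\theta+L(\theta)\le 9/20-\varepsilon$ in the complementary range, the transition at $\theta=2/5$ being the point where the dominant part of the density integral switches between the two relevant $\sigma$-ranges of the Huxley--Jutila estimate. This yields the displayed bound, and hence the theorem.

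The main obstacle is the density step for the twisted family $\{\chi_1\chi_2\}$. One cannot simply discard the $\varphi(g_2)^{-1}$ weight and quote a density estimate for all characters of modulus $\le qD$, since that would cost a factor $(qD)^{2}=x^{2\theta+2L(\theta)}$ and destroy the bound. What is needed is a density estimate asymmetric in the two directions --- essentially unweighted but with only $(\log x)^{o(1)}$ moduli in the $q$-direction, and properly averaged in the $d$-direction --- together with control of the dependence on the conductor of $\chi_1$, which may be as large as $x^{\theta}$. Getting the exponents in this hybrid estimate right is what pins down the constant $5/12$ and the two-piece shape of $L(\theta)$; consistently with the introduction, pushing $\theta$ past $5/12$ would require improving the Huxley--Jutila estimate, and dropping the smoothness hypothesis on $q$ would require improving Siegel's bound on the exceptional zero.
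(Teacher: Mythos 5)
Your proposal correctly captures the strategy for a thin sub-range of the $d$-sum, but it misses the central idea of the paper and, as written, contains a gap that the authors explicitly circumvent by using a different method for the bulk of the range.

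After passing to primitive characters, the paper separates the $d$'s according to whether the $d$-part $d_1(\chi)$ of the conductor is smaller or larger than a fixed power of $\log x$. \emph{Only} for the small-$d_1$ range (conductor $\le q(\log x)^{O(1)} < x^{5/12-\eta+o(1)}$) does the paper use the explicit-formula-plus-zero-density argument you describe, combined with Iwaniec's zero-free region (their Lemma~\ref{notzerolem}, which is where the smoothness hypothesis on $q$ actually enters, not Siegel's theorem as in your sketch). Your own proposal identifies the obstruction precisely: once $d$ is a genuine power of $x$, the Huxley--Jutila density bound $(rT)^{(12/5+\varepsilon)(1-\sigma)}$ with $r\approx qd$ loses, because $qd$ exceeds $x^{5/12}$. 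But you then wave at a ``density estimate asymmetric in the two directions'' without producing one, and no such hybrid estimate at the required strength is available --- if it were, the restriction $\theta\le 5/12-\eta$ could be improved, which the introduction states is not possible without improving Huxley--Jutila.

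What the paper actually does for the large-$d_1$ range (which carries almost the entire sum, $d_1$ up to $x^{L(\theta)}$) is entirely different and is absent from your proposal: after smoothing $\psi$ to $R$ (Lemma~\ref{psitoR}), they apply Heath-Brown's identity with $k=7$ (Lemma~\ref{HBID}) to decompose $\Lambda$ into a product of fourteen Dirichlet polynomials $N_1,\dots,N_{14}$, pass to a vertical line by Mellin inversion, and then estimate mean values of these polynomials over the family of twisted characters. This uses the hybrid large sieve (Lemma~\ref{larsie}), the fourth-moment bound for $L(1/2+it,\chi)$ (Lemmas~\ref{L4thmom}, \ref{Diripoly4powerest}), Huxley's large-values estimate (Lemma~\ref{huxlem}), and --- crucially for $\theta>2/5$, where the piecewise definition of $L(\theta)$ comes from --- a combinatorial lemma (Lemma~\ref{comblem}) that guarantees a Dirichlet polynomial factor of length $>x^{1/6-\varepsilon}$ to which Burgess's character-sum bound (Lemma~\ref{burgess}) can be applied. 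The break at $\theta=2/5$ and the coefficient $9/20$ arise from the case analysis of the $N_i$-lengths and the Burgess/large-values interplay, not from a transition inside a density integral as you suggest. So the proposal does not constitute a proof: it is missing the Heath-Brown decomposition and the entire Dirichlet-polynomial mean-value apparatus that handles the dominant part of the sum.
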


\begin{theorem} \label{maintheo}
Suppose that \eqref{thetaccond} holds.  For sufficiently large $x$, there are primes $p_1 < \cdots < p_t$ in $(x/2, x]$ congruent to $a \pmod{q}$ with
\[ p_t - p_1 \ll q \exp \left( \frac{2t}{L(\theta)} \right) . \]
The implied constant depends on $t$, $\eta$, $C$ and $\varepsilon$.
\end{theorem}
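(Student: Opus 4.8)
The plan is to run Maynard's multidimensional sieve \cite{May} in the progression $n \equiv a \pmod q$, with Theorem~\ref{bomvino} playing the role of the classical Bombieri--Vinogradov theorem. First I would fix an integer $k=k(t,\theta)$ (to be chosen at the end) and an admissible $k$-tuple $\mathcal H=\{h_1<\cdots<h_k\}$ of positive integers with $h_k-h_1 \ll k\log k$ — e.g.\ the $k$ smallest primes exceeding $k$. A mild $W$-trick is needed: set $W'=\prod_{p\le D_0,\ p\nmid q}p$ with $D_0=\log\log\log x$, and pick $b\bmod W'$ with $(b+qh_i,W')=1$ for all $i$, possible since $(q,p)=1$ for $p\mid W'$ and $\mathcal H$ is admissible. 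One then works with $n$ in $(x/2,\,x-qh_k]$ subject to $n\equiv a\pmod q$ and $n\equiv b\pmod{W'}$; since $qh_k=x^{\theta+o(1)}=o(x)$ this range still has length $\sim x/2$, and each such $n+qh_i$ is automatically coprime to $q$ because $n\equiv a$ with $(a,q)=1$.

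Next I would introduce the weights $w_n=\big(\sum_{\mathbf d}\lambda_{d_1,\dots,d_k}\big)^2$, the inner sum over $d_i\mid n+qh_i$ with $\big(\prod_i d_i,\,qW'\big)=1$, where $\lambda_{\mathbf d}$ is supported on squarefree $\prod_i d_i\le R:=x^{L(\theta)/2-\delta}$ ($\delta>0$ small) and built from a fixed smooth $F$ on the standard simplex exactly as in \cite{May}, so that $|\lambda_{\mathbf d}|\ll\logl^{k}$. The quantities to estimate are $S_1=\sum_n w_n$ and $S_2=\sum_n\big(\sum_{i=1}^k\mathbbm{1}_{\mathbb{P}}(n+qh_i)\big)w_n$. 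The evaluation of $S_1$ is the familiar diagonalisation of \cite{May}: counting $n$ in a progression to modulus $qW'\prod_i[d_i,e_i]$, the fractional-part error is $\ll R^2\logl^{2k}\ll x^{1/2+o(1)}$, hence negligible, and $S_1=(1+o(1))\,\mathfrak S\,\frac{x}{q}(\log R)^{k}I_k(F)$ with $\mathfrak S=\mathfrak S(W',k)>0$, the primes $p\mid q$ contributing nothing since the $d_i$ avoid $q$. In $S_2=\sum_{j=1}^k S_2^{(j)}$ one takes $d_j=e_j=1$ and is reduced to prime counts to moduli $qW'\prod_{i\ne j}[d_i,e_i]\le q\,x^{L(\theta)-2\delta}$; replacing these by their main terms, the total error is, by Cauchy--Schwarz against the trivial bound and Theorem~\ref{bomvino} (with $2A$ in the role of $A$),
\[
 \ll \logl^{2k}\!\!\sum_{\substack{D \le x^{L(\theta)}\\ (D,q)=1}}\!\! \tau_{3k}(D)\max_{(a,qD)=1}\Big|\psi(x;qD,a) - \tfrac{x}{\varphi(qD)}\Big| \ \ll\ \frac{x}{\varphi(q)}\,\logl^{O_k(1)-A},
\]
which is negligible once $A$ is large. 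The surviving main term is $S_2=(1+o(1))\,\mathfrak S\,\frac{q}{\varphi(q)}\,\frac{x}{q}\,\frac{(\log R)^{k+1}}{\log x}\sum_{m=1}^{k}J_k^{(m)}(F)$, with the \emph{same} $\mathfrak S$ as in $S_1$ and a harmless extra factor $q/\varphi(q)\ge 1$ coming from the primality condition at primes dividing $q$.

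Combining these and optimising $F$ as in \cite{May}, with $M_k=\sup_F\big(\sum_m J_k^{(m)}(F)\big)/I_k(F)$, I get
\[
 S_2 - (t-1)S_1 \ \ge\ (1+o(1))\,S_1\Big(\big(\tfrac{L(\theta)}{2}-\delta\big)M_k - (t-1)\Big),
\]
so it suffices to choose $k$ with $\big(\tfrac{L(\theta)}{2}-\delta\big)M_k > t-1$. Using $M_k\ge\log k-2\log\log k-2$ for large $k$ from \cite{May}, I would take $k$ minimal with $\log k-2\log\log k-2>2(t-1)/L(\theta)$ and then fix $\delta$ small in terms of $k$; this makes $S_2>(t-1)S_1$ for $x$ large, so since $w_n\ge 0$ some admissible $n$ has at least $t$ of $n+qh_1,\dots,n+qh_k$ prime. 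Taking $t$ of these primes $p_1<\cdots<p_t$, they lie in $(x/2,x]$, are $\equiv a\pmod q$, and $p_t-p_1\le q(h_k-h_1)\ll q\,k\log k$. Since $\log k=2t/L(\theta)+O(\log t)$, this gives $p_t-p_1\ll_t q\exp(2t/L(\theta))$, i.e.\ Theorem~\ref{maintheo}; keeping the $t$-dependence explicit, $k\log k\ll\exp\big((2/L(\theta))t+O(\log t)\big)$, so for any fixed $B>2/L(\theta)$ one obtains $p_t-p_1\ll q\exp(Bt)$, and since $\varepsilon$ may be taken arbitrarily small this recovers the abstract's range $B>40/(9-20\theta)$ for $\theta\ge 2/5$.

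The hard part will be the error analysis for $S_2$, namely reducing the bilinear error $\sum_{\mathbf d,\mathbf e}\lambda_{\mathbf d}\lambda_{\mathbf e}\,\Delta(\cdots)$ to the single sum $E_{L(\theta)}(x,q)$ of Theorem~\ref{bomvino}; this is what forces $R^2\le x^{L(\theta)-2\delta}$ and requires the divisor weights $\tau_{3k}(D)$ and the coprimality $(D,q)=1$ to be folded in so as to match the shape of $E_b(x,q)$. Alongside this one must check that the main-term computations — in particular the local factors at primes dividing $q$ — behave as asserted; this is exactly where the hypothesis $\prod_{p\mid q}p\le\logl^{C}$ is used, keeping $\mathfrak S\gg\logl^{-o(1)}$ and $q/\varphi(q)=\logl^{o(1)}$ so that the $\logl^{-A}$ saving from Theorem~\ref{bomvino} dominates the main term for suitable $A$.
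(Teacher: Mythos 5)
Your proposal is essentially the same argument as the paper's, just unpacked rather than outsourced. Where you re-derive Maynard's sieve from scratch (weights $w_n$, the $W$-trick, the $S_1/S_2$ dichotomy, reduction of the bilinear error to a $\tau_{3k}$-weighted sum, Cauchy--Schwarz plus Brun--Titchmarsh to convert Theorem~\ref{bomvino} into the needed divisor-weighted estimate), the paper simply quotes a packaged version, Lemma~\ref{specmaylem}, which is \cite[Theorem 1]{BakZha1} specialised to this $q$; the two verification conditions \eqref{maycond1}, \eqref{maycond2} there are precisely your trivial estimate for $S_1$ and your Bombieri--Vinogradov input for $S_2$. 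The one quantitative difference is the choice of lower bound for $M_k$: you invoke Maynard's $M_k \ge \log k - 2\log\log k + O(1)$, whereas the paper uses the Polymath refinement $M_k \ge \log k + O(1)$ from \eqref{Mklowerbd}. Since the theorem's implied constant is allowed to depend on $t$, the $O(\log\log k)=O(\log t)$ loss in your bound is harmless, and both lead to $\log k = 2t/L(\theta) + O_t(1)$, hence $p_t - p_1 \ll q\,k\log k \ll_t q\exp(2t/L(\theta))$. Two small cosmetic deviations worth noting: you take $D_0 = \log\log\log x$ (the paper's Lemma~\ref{specmaylem} uses $D_0 = \log\log(x/2)/\log\log\log(x/2)$, which forces $W'=(\log x)^{o(1)}$, still negligible); and you insert a little slack $R = x^{L(\theta)/2-\delta}$ against the fixed level $L(\theta)$, whereas the paper instead reapplies Theorem~\ref{bomvino} with $\varepsilon/2$ in place of $\varepsilon$ to gain the slack. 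Both devices achieve the same thing. In short, the proposal is correct, and the route is the one the paper takes, modulo your decision to re-derive rather than cite the Maynard-sieve machinery.
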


For completeness, we also include an analog of the Barban-Davenport-Halberstam theorem (\cite[Chapter 29]{HD}).
\begin{theorem} \label{bardavhal}
Suppose that \eqref{thetaccond} holds.  Let $A>0$.  Then we have
\[ \sum_{\substack{d \leq Q/q \\ (d,q)=1}} \sum_{\substack{a=1 \\ (a,qd)=1}}^{qd} \left( \psi(x; qd,a) - \frac{x}{\varphi(qd)} \right)^2 \ll \frac{xQ\logl}{\varphi(q)} \]
whenever $x \mathcal{L}^{-A} \leq Q \leq x$.  The implied constant depends on $\eta$, $C$ and $A$.
\end{theorem}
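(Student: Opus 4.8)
The plan is to split the $d$-sum at $d=x^{L(\theta)}$, controlling the small moduli by Theorem~\ref{bomvino} and the large ones by the multiplicative large sieve. Write $S=S_1+S_2$ for the left-hand side, where $S_1$ collects the terms with $d\le x^{L(\theta)}$. For $S_1$ one uses that, for each modulus $m=qd$,
\[
\sum_{(a,m)=1}\Bigl(\psi(x;m,a)-\frac{x}{\varphi(m)}\Bigr)^{2}\le\Bigl(\max_{(a,m)=1}\bigl|\psi(x;m,a)-\tfrac{x}{\varphi(m)}\bigr|\Bigr)\sum_{(a,m)=1}\Bigl(\psi(x;m,a)+\frac{x}{\varphi(m)}\Bigr),
\]
and the last sum is $\sum_{(a,m)=1}\psi(x;m,a)+x\le\psi(x)+x\ll x$; summing over $d\le x^{L(\theta)}$ with $(d,q)=1$ gives $S_1\ll x\,E_{L(\theta)}(x,q)$, whence $S_1\ll x^{2}/(\varphi(q)\mathcal L^{A})\ll xQ\mathcal L/\varphi(q)$ by Theorem~\ref{bomvino} (with its parameter taken large in terms of the present $A$) and $Q\ge x\mathcal L^{-A}$.

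For $S_2$ (the range $x^{L(\theta)}<d\le Q/q$) I would expand $\psi(x;qd,a)$ in characters mod $qd$, replace $x/\varphi(qd)$ by $\psi(x,\chi_{0})/\varphi(qd)$ — the difference being $\ll x\exp(-c\sqrt{\mathcal L})+\mathcal L^{2}$ per modulus, by the prime number theorem and $\omega(qd)\ll\mathcal L$, hence contributing $\ll xQ\mathcal L/\varphi(q)$ after summation — and apply orthogonality; since $(d,q)=1$ one has $\varphi(qd)=\varphi(q)\varphi(d)$, so it suffices to show $\sum_{x^{L(\theta)}<d\le Q/q,\,(d,q)=1}\varphi(d)^{-1}\sum_{\chi\ne\chi_{0}\bmod qd}|\psi(x,\chi)|^{2}\ll xQ\mathcal L$. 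Reduce each $\chi$ to its primitive inductor and sort by conductor $m$; as $m\mid qd$ with $(d,q)=1$, we have $m=m_{1}m_{2}$ with $m_{1}\mid q$ and $(m_{2},q)=1$, and the $d$-sum attaches to $m$ a weight $\ll\log(2Q/(qm_{2}))/\varphi(m_{2})$. For the characters with $m_{1}=1$ this is a Barban--Davenport--Halberstam sum over conductors $\le Q/q$: combine the Siegel--Walfisz theorem (conductors $\le\mathcal L^{B}$, $B$ large) with the large sieve $\sum_{r\le R}\frac r{\varphi(r)}\sum^{*}_{\chi\bmod r}|\psi(x,\chi)|^{2}\ll(R^{2}+x)x\mathcal L$, using $Q/q\ge x^{1/2}$ (a consequence of $\theta\le\frac{5}{12}-\eta$); the contribution of conductors just below $x^{1/2}$ is admissible because $\theta+\varepsilon<\frac{1}{2}$. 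For the characters with $m_{1}>1$ — whose conductor shares a factor with $q$, hence lie outside the classical setting — I would factor $\psi(x,\cdot)$ through its $m_{1}$-component (legitimate since $(m_{1},m_{2})=1$), reducing, after twisting $\Lambda$ by a fixed primitive character $\chi_{m_{1}}^{*}$ mod $m_{1}$, to a large-sieve average over characters mod $m_{2}\le Q/q$; the saving $1/\varphi(q)$ is recovered from $\sum_{m_{1}\mid q}\varphi^{*}(m_{1})=\varphi(q)$ together with $\varphi(m_{1})\le\varphi(q)$, while $\prod_{p\mid q}p\le\mathcal L^{C}$ keeps the number of divisors $m_{1}$ of $q$ under control.

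The main obstacle is this last piece. For $m_{1}>\mathcal L^{B}$ there is no Siegel--Walfisz saving, and the large sieve alone bounds $\sum^{*}_{\chi\bmod m_{1}m_{2}}|\psi(x,\chi)|^{2}$ (for small $m_{2}$) only by $\asymp m_{1}x^{2}\mathcal L$, losing the factor $m_{1}$ that the remainder of the argument cannot absorb; one must instead feed in the large-values / zero-density estimates for Dirichlet $L$-functions (the Huxley--Jutila ingredient behind Theorem~\ref{bomvino}) to bound the number of such $\chi$ with $|\psi(x,\chi)|$ large. Arranging the estimate so that every error term — in particular the contribution of moduli near $x^{\theta+L(\theta)}$ — stays $\ll xQ\mathcal L/\varphi(q)$ is precisely where the exact form of $L(\theta)$ and the smallness of $\varepsilon$ relative to $\eta$ are used.
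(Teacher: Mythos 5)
Your $S_1$ piece is fine: bounding the sum of squares by $\bigl(\max_a|\psi(x;qd,a)-x/\varphi(qd)|\bigr)\cdot O(x)$ and invoking Theorem~\ref{bomvino} does give $S_1\ll x^2/(\varphi(q)\mathcal L^{A})\ll xQ\mathcal L/\varphi(q)$, and this is a legitimate (if heavier) way to dispose of the range $d\le x^{L(\theta)}$. It is, however, a different route from the paper's: the paper never invokes Theorem~\ref{bomvino} in this proof. It reduces via Lemma~\ref{charredu}(iii) to the quantity
\[
\sum_{q_1\mid q}\ \sum_{\substack{d_1\le Q/q\\ (d_1,q)=1}}\ \sum_{\substack{d\le Q/q,\ d_1\mid d\\ (d,q)=1}}\frac1{\varphi(d)}\ \sideset{}{^{\star}}\sum_{\chi\bmod q_1 d_1}|\psi(x,\chi)|^2,
\]
splits \emph{on the size of $d_1$} (the conductor's $d$-part, not on $d$ itself), handles $d_1\le\mathcal L^{2A}$ by the zero-density / Iwaniec estimates already proved for inequality~\eqref{chidagineq}, and handles $d_1>\mathcal L^{2A}$ by the hybrid large sieve, Lemma~\ref{larsie}.

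The $S_2$ piece of your proposal is where the real gap lies, and you say so yourself. The per-conductor large sieve gives, for fixed conductor $m_1m_2$ with $m_1\mid q$, $(m_2,q)=1$, the bound $\sum^{\star}_{\chi\bmod m_1m_2}|\psi(x,\chi)|^2\ll(m_1m_2+x)x\mathcal L\ll x^2\mathcal L$ (since $m_1m_2\le Q\le x$; your ``$\asymp m_1x^2\mathcal L$'' overstates the loss). After attaching the weight $\ll\mathcal L/\varphi(m_2)$ and summing over $m_1\mid q$ and small $m_2$, the $x$-term alone contributes $\gg\tau(q)x^2\mathcal L^2$, which overwhelms $Qx\mathcal L$ when $Q\asymp x\mathcal L^{-A}$. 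What you are missing is precisely Lemma~\ref{larsie}: the Farey fractions $\{j/(q_1d):q_1\mid q,\ d\le D,\ (d,q)=1\}$ are $1/(qD^2)$-spaced (not $1/(qD)^2$-spaced), and this yields
\[
\sum_{q_1\mid q}\sum_{\substack{d\le D\\(d,q)=1}}\frac{q_1d}{\varphi(q_1d)}\ \sideset{}{^{\star}}\sum_{\chi\bmod q_1d}|\psi(x,\chi)|^2\ll(x+qD^2)\,x\mathcal L,
\]
which \emph{does} carry the dyadic sum over $d_1>\mathcal L^{2A}$ through and closes the estimate. Finally, the closing remark that the exact shape of $L(\theta)$ and the Huxley large-values technology are needed here is off the mark: the paper's proof of Theorem~\ref{bardavhal} uses only Lemma~\ref{larsie} together with the small-conductor zero-density input, and does not touch $L(\theta)$, Lemma~\ref{huxlem}, or the Heath-Brown decomposition at all.
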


In what follows, we count the constant function $1$ as a primitive character.

\section{Preliminary Lemmas}

Unless otherwise stated, implied constants depend on $C$, $\eta$, $\varepsilon$ and $A$ (if present). \newline

For a Dirichlet character $\chi$, we use $\hat{\chi}$ to denote the primitive character that induces $\chi$.  Moreover, let
\[ \sideset{}{^{\prime}}\sum_{\chi \bmod{r}} \; \; \; \; \mbox{and} \; \; \; \; \sideset{}{^{\star}}\sum_{\chi \bmod{r}} \]
stand for, respectively, a sum restricted to nonprincipal characters modulo $r$ and a sum restricted to primitive nonprincipal characters modulo $r$.  As usual, let
\[ \psi(x , \chi) = \sum_{n \leq x} \Lambda(n) \chi(n) . \]

For a character $\chi \pmod{qd}$ where $(d,q)=1$, the conductor of $\chi$ takes the form $q_1(\chi) d_1(\chi)$ where
\[ q_1 (\chi) | q , \; d_1(\chi) | d . \]

\begin{lemma} \label{charredu}
\begin{enumerate}[(i)]
\item We have, for $r < x$,
\[ \max_{(a,r)=1} \left| \psi(x; r,a) - \frac{x}{\varphi(r)} \right| \ll \frac{1}{\varphi(r)} \sideset{}{^{\prime}}\sum_{\chi \bmod{r}} | \psi(x, \chi) | + \frac{x}{\varphi(r) \mathcal{L}^A} . \]
\item For each of the characters $\chi$ in the above, we have
\[ | \psi(x,\chi) | - | \psi(x, \hat{\chi}) | \ll \mathcal{L}^2 . \]
\item We have
\[ \sum_{\substack{a=1 \\ (a,r)=1}}^r \left( \psi(x ; r,a) - \frac{x}{\varphi(r)} \right)^2 \ll \frac{1}{\varphi(r)} \sideset{}{^{\prime}}\sum_{\chi \bmod{r}} | \psi(x,\chi) |^2 + \frac{x^2}{\varphi(r) \mathcal{L}^A} . \]
\end{enumerate}
\end{lemma}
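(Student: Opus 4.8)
The plan is to derive all three parts from the orthogonality relations for Dirichlet characters modulo $r$, together with the prime number theorem; nothing deeper is required.

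For (i), I would start from the identity $\psi(x;r,a)=\varphi(r)^{-1}\sum_{\chi\bmod r}\bar\chi(a)\psi(x,\chi)$, valid for $(a,r)=1$, and separate off the contribution of the principal character $\chi_0$. Since $\psi(x,\chi_0)=\psi(x)-\sum_{p\mid r}\sum_{p^k\le x}\log p$ and the subtracted double sum has at most $\omega(r)\ll\mathcal{L}$ terms, each of size at most $\log x=\mathcal{L}$, we obtain $\psi(x,\chi_0)=\psi(x)+O(\mathcal{L}^2)$; the prime number theorem then gives $\varphi(r)^{-1}\psi(x,\chi_0)=x/\varphi(r)+O(x/(\varphi(r)\mathcal{L}^A))$. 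Subtracting $x/\varphi(r)$, using $|\bar\chi(a)|\le 1$, applying the triangle inequality to the sum over nonprincipal $\chi$, and maximising over $a$ yields the claimed bound.

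For (ii), I would use that a character $\chi\bmod r$ induced by the primitive character $\hat\chi$ satisfies $\chi(n)=\hat\chi(n)$ when $(n,r)=1$ and $\chi(n)=0$ otherwise, so that $\psi(x,\chi)-\psi(x,\hat\chi)=-\sum_{n\le x,\,(n,r)>1}\Lambda(n)\hat\chi(n)$; the surviving $n$ are prime powers $p^k$ with $p\mid r$, and exactly as in (i) this sum is $O(\mathcal{L}^2)$ in absolute value. The assertion then follows from the reverse triangle inequality $\bigl||\psi(x,\chi)|-|\psi(x,\hat\chi)|\bigr|\le|\psi(x,\chi)-\psi(x,\hat\chi)|\ll\mathcal{L}^2$.

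For (iii), the same starting identity gives $\psi(x;r,a)-x/\varphi(r)=\varphi(r)^{-1}\sideset{}{^{\prime}}\sum_{\chi\bmod r}\bar\chi(a)\psi(x,\chi)+\varphi(r)^{-1}(\psi(x,\chi_0)-x)$, where the last term is $O(x/(\varphi(r)\mathcal{L}^B))$ for any fixed $B$. I would then use $(u+v)^2\le 2u^2+2v^2$, sum over the $\varphi(r)$ residues $a$ with $(a,r)=1$ — which turns the error into $O(x^2/(\varphi(r)\mathcal{L}^{2B}))$ — and invoke orthogonality a second time: $\sum_{(a,r)=1}\bar\chi_1(a)\chi_2(a)$ equals $\varphi(r)$ if $\chi_1=\chi_2$ and $0$ otherwise. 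The off-diagonal terms cancel and the double sum over pairs of nonprincipal characters collapses to $\varphi(r)\sideset{}{^{\prime}}\sum_{\chi}|\psi(x,\chi)|^2$; choosing $B$ large in terms of $A$ completes the estimate. There is no genuine obstacle here: the only points to watch, since $r$ may be as large as $x$, are that $\omega(r)$ and the total mass of prime powers dividing $r$ are negligible (both are $\ll\mathcal{L}^2$), and that the prime number theorem is used only with a power-of-logarithm — not power — saving, which is unconditional and uniform in $r$.
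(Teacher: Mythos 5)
Your proposal is correct and is precisely the standard orthogonality argument that the paper itself does not reproduce but simply cites to Davenport, pp.\ 162--163 and 169--170: (i) and (iii) come from $\psi(x;r,a)=\varphi(r)^{-1}\sum_{\chi}\bar\chi(a)\psi(x,\chi)$ with the principal-character term controlled by the prime number theorem and the $O(\mathcal{L}^2)$ correction from prime powers dividing $r$, while (ii) is the direct comparison of $\chi$ with the inducing primitive character. All the estimates (the $O(\mathcal{L}^2)$ bound, the choice of $B$ in terms of $A$, the uniformity in $r<x$) check out, so nothing is missing.
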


\begin{proof}
These are standard results.  See, for example, pp. 162-163 and 169-170 in \cite{HD}.
\end{proof}

\begin{lemma} \label{charprimredu}
\begin{enumerate}[(i)]

\item For any natural number $r$ and any complex-valued function $F$ defined on Dirichlet characters, we have
\begin{equation} \label{chartrans1}
\sideset{}{^{\prime}}\sum_{\chi \bmod{r}} F ( \hat{\chi} ) = \sum_{r_1 | r}  \ \ \sideset{}{^{\star}}\sum_{\chi_1 \bmod{r_1}} F ( \chi_1) .
\end{equation}

\item Let $H>0$.  Suppose further that $F \geq 0$, that $F(\hat{\chi}) =0$ for $d_1(\chi)  \leq H$, and $qD < x$.  There exists $D_1 \in (H, D]$ such that
\begin{equation} \label{chartrans2}
\sum_{\substack{d \leq D \\ (d,q)=1}} \ \ \sideset{}{^{\prime}}\sum_{\chi \bmod{qd}} F ( \hat{\chi} ) \ll \frac{\mathcal{L} D}{D_1} \sum_{q_1 | q} \sum_{\substack{D_1 < d \leq 2D_1 \\ (d,q)=1}} \ \sideset{}{^{\star}}\sum_{\chi \bmod{q_1d_1}} F(\chi) . 
\end{equation}

\end{enumerate}
\end{lemma}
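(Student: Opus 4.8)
The plan is to treat (i) as the classical correspondence between Dirichlet characters and primitive characters, and to deduce (ii) from it by reindexing followed by a dyadic pigeonhole step. For (i), recall that every $\chi \bmod r$ is induced by a unique primitive character $\hat\chi$, of some conductor $r_1 \mid r$, and conversely each primitive character $\chi_1 \bmod{r_1}$ with $r_1 \mid r$ induces exactly one character mod $r$; since the constant function $1$ is by convention the primitive character of conductor $1$, the principal character $\chi_0 \bmod r$ is the one corresponding to $\chi_1 = 1$, whereas for $r_1 > 1$ every primitive character mod $r_1$ is automatically nonprincipal. Hence $\chi \mapsto \hat\chi$ restricts to a bijection from the nonprincipal characters mod $r$ onto the set of pairs $(r_1, \chi_1)$ with $r_1 \mid r$ and $\chi_1$ primitive nonprincipal mod $r_1$ (the value $r_1 = 1$ contributing nothing). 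Regrouping the sum on the left of \eqref{chartrans1} by the conductor of $\hat\chi$ yields the right-hand side.

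For (ii), apply \eqref{chartrans1} with $r = qd$. Because $(d,q) = 1$, each divisor of $qd$ factors uniquely as $q_1 d_1$ with $q_1 \mid q$, $d_1 \mid d$, and then $d_1 \mid d$ forces $(d_1,q) = 1$; so after substituting and interchanging the two outer summations, the left side of \eqref{chartrans2} equals
\[ \sum_{q_1 \mid q} \ \sum_{\substack{d_1 \leq D \\ (d_1,q)=1}} N(d_1) \sideset{}{^{\star}}\sum_{\chi \bmod{q_1 d_1}} F(\chi), \qquad N(d_1) := \#\{\, m \leq D/d_1 : (m,q) = 1 \,\}, \]
because for fixed $d_1$ the admissible values of $d$ are precisely $d = d_1 m$ with $m \leq D/d_1$ and $(m,q) = 1$. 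Now the hypotheses enter: $F \geq 0$ allows $N(d_1)$ to be replaced by the upper bound $D/d_1$, and $F(\hat\chi) = 0$ for $d_1(\chi) \leq H$ — together with the observation that a primitive character mod $q_1 d_1$ has $d$-part of its conductor equal to $d_1$ — allows all terms with $d_1 \leq H$ to be discarded. This bounds the left side of \eqref{chartrans2} by $D \sum_{q_1 \mid q} \sum_{H < d_1 \leq D,\, (d_1,q)=1} d_1^{-1} \sideset{}{^{\star}}\sum_{\chi \bmod{q_1 d_1}} F(\chi)$.

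To finish, I would cover the range $H < d_1 \leq D$ by $O(\mathcal{L})$ dyadic blocks $(D_1, 2D_1]$, which is legitimate since $D < x$; on each such block $d_1^{-1} \leq D_1^{-1}$, and, $F$ being nonnegative, a pigeonhole choice of block retains a proportion $\gg \mathcal{L}^{-1}$ of the full sum, which, after renaming $d_1$ as $d$, is \eqref{chartrans2}. I do not expect a genuine obstacle: the whole lemma is bookkeeping with conductors, divisors and coprimality conditions. The two points that merit a little care are that the sign hypothesis $F \geq 0$ is exactly what makes both the estimate $N(d_1) \leq D/d_1$ and the pigeonhole inequality point in the needed direction, and the treatment of the dyadic block straddling $d_1 = H$, handled either by noting that its terms with $d_1 \leq H$ vanish or by nudging $D_1$ slightly above $H$ so as to keep $D_1 \in (H, D]$ as the statement requires.
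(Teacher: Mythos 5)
Your proof is correct and follows essentially the same route as the paper's: apply (i) with $r=qd$, use the unique factorization of conductors as $q_1d_1$ (valid since $(d,q)=1$), interchange to pull out the divisor count $N(d_1)\le D/d_1$, drop terms with $d_1\le H$ via the vanishing hypothesis, and pigeonhole over $O(\mathcal{L})$ dyadic blocks using $F\ge 0$. Incidentally you have correctly recovered the intended inequality: the paper's displayed proof contains a typo ($\mathcal{L}D_1/D$ where $\mathcal{L}D/D_1$ is meant) and the statement of \eqref{chartrans2} writes $d$ in the range but $d_1$ in the modulus; your reading with the summation variable matching the modulus is the right one.
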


\begin{proof}
The equation \eqref{chartrans1} is immediate from allocating the conductors of $\hat{\chi}$ into classes corresponding to divisors of $r$.  For \eqref{chartrans2}, the left-hand side is
\begin{equation*}
\begin{split}
 \sum_{\substack{d \leq D \\ (d,q)=1}} \ \sum_{\substack{q_1 | q, d_1 | d \\ d_1 > H}} \ \ \sideset{}{^{\star}}\sum_{\chi_1 \bmod{q_1d_1}} F(\chi_1) & = \sum_{q_1 |q} \sum_{\substack{H< d_1 \leq D \\ (d_1,q)=1}} \left( \sum_{\substack{d \leq D \\ (d,q)=1 \\ d \equiv 0 \bmod{d_1}}} 1 \right) \sideset{}{^{\star}}\sum_{\chi_1 \bmod{q_1d_1}} F(\chi_1) \\
 & \ll \mathcal{L} \frac{D_1}{D} \sum_{q_1 | q} \sum_{\substack{ D_1 < d_1 \leq 2D_1 \\ (d_1, q) = 1}} \ \sideset{}{^{\star}}\sum_{\chi_1 \bmod{q_1d_1}} F(\chi_1) 
 \end{split}
 \end{equation*}
 for some $D_1$, $H \leq D_1 \leq D$, by splitting the range of $d_1$ into dyadic intervals.  This completes the proof.
\end{proof}

As an example of the last lemma, let $\varphi^* (r)$ denote the number of primitive characters modulo $r$.  Then
\begin{equation} \label{primcharnum}
\sum_{r_1 | r} \varphi^* (r_1) = \varphi(r) .
\end{equation}

\begin{lemma} \label{psitoR}
Let $L = L(\theta )$ as in Theorem~\ref{bomvino} and
\[ R(x ; r, a) = \sum_{\substack{ n \leq x \\ n \equiv a \bmod{r}}} \Lambda (n) \log \frac{x}{n} . \]
Suppose that for $D \ll x^L$ and some $A>0$,
\begin{equation} \label{transcond}
\sum_{\substack{D < d \leq 2D \\  (d,q)=1}} \max_{(a,qd)=1} \left| R ( x; qd, a) - \frac{x}{\varphi(qd)} \right| \ll \frac{x}{\varphi(q) \mathcal{L}^{2A+1}}.
\end{equation}
Then for $D \ll x^L$,
\[ \sum_{\substack{D < d \leq 2D \\  (d,q)=1}} \max_{(a,qd)=1} \left| \psi ( x; qd, a) - \frac{x}{\varphi(qd)} \right| \ll \frac{x}{\varphi(q) \mathcal{L}^A}. \]
\end{lemma}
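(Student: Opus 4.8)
The plan is to recover $\psi$ from the smoothed sum $R$ by a short-interval difference argument, exploiting that $t\mapsto\psi(t;r,a)$ is nondecreasing. The starting observation is the elementary identity, valid for $2\le y<z$,
\[ R(z;r,a)-R(y;r,a)=\log\tfrac{z}{y}\,\psi(y;r,a)+\sum_{y<n\le z}\Lambda(n)\log\tfrac{z}{n}, \]
which follows from $\log(z/n)-\log(y/n)=\log(z/y)$ for $n\le y$ together with $\log(x/n)=\int_n^x\dif t/t$ (equivalently $R(z;r,a)-R(y;r,a)=\int_y^z\psi(t;r,a)\,\dif t/t$). Since $0\le\log(z/n)\le\log(z/y)$ on $(y,z]$, this sandwiches
\[ \log\tfrac{z}{y}\,\psi(y;r,a)\le R(z;r,a)-R(y;r,a)\le\log\tfrac{z}{y}\,\psi(z;r,a). \]

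Next I would fix a small parameter $\delta\in(0,1/2]$ and apply this with $(y,z)=(x,(1+\delta)x)$ to bound $\psi(x;r,a)$ from above, and with $(y,z)=((1-\delta)x,x)$ to bound it from below. Writing $E_w=E_w(r,a):=R(w;r,a)-w/\varphi(r)$, inserting the main terms, and using the elementary estimates $\delta/\log(1+\delta)=1+O(\delta)$ and $\delta/\log\frac{1}{1-\delta}=1+O(\delta)$ together with $\log(1+\delta),\ \log\frac{1}{1-\delta}\ge\delta/2$, both one-sided inequalities collapse into
\[ \left|\psi(x;r,a)-\frac{x}{\varphi(r)}\right|\ll\frac{1}{\delta}\Big(|E_{(1-\delta)x}|+|E_x|+|E_{(1+\delta)x}|\Big)+\frac{\delta x}{\varphi(r)}, \]
uniformly for $r<x$ and $(a,r)=1$.

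Then I would put $r=qd$, take the maximum over $a$ (bounding each $|E_w(qd,a)|$ by $\max_{(a,qd)=1}|R(w;qd,a)-w/\varphi(qd)|$), and sum over $D<d\le2D$ with $(d,q)=1$. Since $(d,q)=1$ forces $\varphi(qd)=\varphi(q)\varphi(d)$ and $\sum_{d\le2D}\varphi(d)^{-1}\ll\mathcal{L}$, the last term contributes $\ll\delta x\mathcal{L}/\varphi(q)$, while each of the three $R$-sums is $\ll x/(\varphi(q)\mathcal{L}^{2A+1})$ by \eqref{transcond} applied at the relevant argument. Choosing $\delta=\mathcal{L}^{-A-1}$ then makes both contributions $\ll x/(\varphi(q)\mathcal{L}^A)$, which is the assertion.

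The only point that needs a word of care is that I invoke \eqref{transcond} not merely at $x$ but also at $(1\pm\delta)x$; this is innocuous, since those arguments differ from $x$ by a factor $1+O(\mathcal{L}^{-A-1})$ and the hypothesis will be verified uniformly for the argument in a short interval about $x$ (its proof being insensitive to such a perturbation), or, if one prefers, one may run the whole implication treating ``$x$'' as a free parameter. No genuine obstacle arises: the argument is just a clean short-interval/Abel-summation device, and the constants work out precisely because the two error contributions are balanced by the single choice $\delta=\mathcal{L}^{-A-1}$.
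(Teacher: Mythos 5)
Your proof is correct and follows essentially the same route as the paper: both exploit the monotonicity of $t\mapsto\psi(t;r,a)$ to sandwich $\psi(x;r,a)$ between difference quotients of $R$ at slightly perturbed arguments, and both balance the resulting error contributions by the choice of perturbation parameter $\mathcal{L}^{-A-1}$ (the paper uses $\lambda=\mathcal{L}^{-A-1}$ and the arguments $e^{\pm\lambda}x$ in place of your $\delta$ and $(1\pm\delta)x$). The subtlety you flag, namely invoking \eqref{transcond} at perturbed arguments rather than at $x$ itself, is handled identically in the paper, which notes that it is ``using \eqref{transcond} with $e^{\mu}x$ in the place of $x$.''
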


\begin{proof}
We start with the identity
\[ R (x; r,a) = \int\limits_1^x \psi (y; r,a) \frac{\dif y}{y} . \]
This, together with the fact that $\psi(y;r,a)$ is nondecreasing in $y$, gives that for all $\lambda >0$,
\begin{equation*}
\begin{split}
 \frac{R(x;r,a) - R(xe^{-\lambda}; r,a)}{\lambda} & = \frac{1}{\lambda} \int\limits_{e^{-\lambda}x}^x \psi(y;r,a) \frac{\dif y}{y} \\
 & \leq \psi(x;r,a) \leq \frac{1}{\lambda} \int\limits_x^{e^{\lambda} x} \psi(y;r,a) \frac{\dif y}{y} = \frac{ R(e^{\lambda}x; r,a) - R(x;r,a)}{\lambda} .
 \end{split}
 \end{equation*}
 This leads to
 \[ \psi(x;r,a) - \frac{x}{\varphi(r)} \leq \frac{R(e^{\lambda} x; r,a) - e^{\lambda} x / \varphi(r)}{\lambda} - \frac{R(x;r,a) - x/\varphi(r)}{\lambda} + \left( \frac{e^{\lambda}-1}{\lambda} - 1 \right) \frac{x}{\varphi(r)} \]
and
\[ \psi(x;r,a) - \frac{x}{\varphi(r)} \geq \frac{R(x;r,a) - x/\varphi(r)}{\lambda} - \frac{R(e^{-\lambda} x ; r,a) - e^{-\lambda}x/\varphi(r)}{\lambda} + \left( \frac{1-e^{-\lambda}}{\lambda} - 1 \right) \frac{x}{\varphi(r)} . \]
Take $\lambda = \mathcal{L}^{-A-1}$ so that
\[ \frac{e^{\lambda}-1}{\lambda} -1 \ll \mathcal{L}^{-A-1} \; \; \; \mbox{and} \; \; \; \frac{1-e^{-\lambda}}{\lambda} -1 \ll \mathcal{L}^{-A-1} . \]
We get, taking $D \ll x^L$, $r=qd$ and summing over $d \in (D, 2D]$, there is $\mu \in \{ 1, 0, -1 \}$ for which
\begin{equation*}
\begin{split}
 \sum_{\substack{D < d \leq 2D \\ (d,q)=1}} & \max_{(a,qd)=1} \left| \psi(x;qd,a) - \frac{x}{\varphi(qd)} \right| \\
 & \ll \mathcal{L}^{A+1} \sum_{\substack{D < d \leq 2D \\ (d,q)=1}} \max_{(a,qd)=1} \left| R(e^{\mu}x;qd,a) - \frac{e^{\mu}x}{\varphi(qd)} \right| + \frac{x \mathcal{L}^{-A-1}}{\varphi(q)} \sum_{1 \leq d \leq 2D} \frac{1}{\varphi(d)} \ll \frac{x}{\varphi(q) \mathcal{L}^A} ,
 \end{split}
 \end{equation*}
 using \eqref{transcond} with $e^{\mu}x$ in the place of $x$.
\end{proof}

In the following lemma, let $\beta + i \gamma$ denote a zero of any of the Dirichlet $L$-functions $L(s,\chi)$ with $\chi$ a non-principal character modulo $r$.
\begin{lemma} \label{elllem}
Let $r < x$.  Then
\[ \sideset{}{^{\prime}}\sum_{\chi \bmod{r}} \left| \psi (x, \chi) \right| \ll \sideset{}{^{\prime}}\sum_{\chi \bmod{r}} \sum_{\substack{\beta \geq 1/2 \\ |\gamma| < x^{1/2}}} \frac{x^{\beta} \mathcal{L}^{A+1}}{| \beta + i \gamma|^2} + x^{1/2} r \mathcal{L}^2  + \frac{x}{\mathcal{L}^A} . \]
\end{lemma}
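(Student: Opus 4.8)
The plan is to pass from $\psi(x,\chi)$ to an explicit-formula expression involving the zeros of $L(s,\hat\chi)$, and then control the resulting sums. First, since $\psi(x,\chi)$ and $\psi(x,\hat\chi)$ differ by $O(\mathcal{L}^2)$ uniformly (Lemma~\ref{charredu}(ii)), and since the number of nonprincipal characters mod $r$ is at most $r$, it suffices to prove the bound with $\hat\chi$ in place of $\chi$ throughout, absorbing an error $\ll r\mathcal{L}^2$ into the stated $x^{1/2}r\mathcal{L}^2$ term. So fix a primitive nonprincipal character $\chi_1$ inducing $\chi$, of conductor $r_1 \mid r$, $r_1 > 1$. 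I would apply the truncated explicit formula (see, e.g., \cite[Chapter 19]{HD}): for $2 \le T \le x$,
\[
\psi(x,\chi_1) = -\sum_{\substack{\rho = \beta + i\gamma \\ |\gamma| \le T}} \frac{x^\rho}{\rho} + O\!\left( \frac{x \mathcal{L}^2}{T} + \mathcal{L}^2 \right),
\]
the sum running over nontrivial zeros of $L(s,\chi_1)$, where the error has been written crudely to accommodate all conductors $r_1 < x$. Choosing $T = x^{1/2}$ makes the main error term $\ll x^{1/2}\mathcal{L}^2$, which again fits inside $x^{1/2}r\mathcal{L}^2$ (in fact is smaller), so we are left to bound $\sum_{|\gamma| \le x^{1/2}} x^\rho/\rho$ summed over the characters.

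Next I would split the zero-sum at the critical line. The contribution of zeros with $\beta < 1/2$ is bounded by the contribution of their reflections: pairing $\rho = \beta + i\gamma$ with $1-\beta-i\gamma$ via the functional equation, or more simply bounding $x^\beta \le x^{1/2}$ for these zeros and using the standard zero-counting estimate $\sum_{|\gamma|\le T, \chi_1} 1 \ll T\mathcal{L}$ (or $N(T,\chi_1) \ll T\log(r_1 T)$), one gets a total of $\ll x^{1/2}(\log(r_1 x))\mathcal{L} \ll x^{1/2}\mathcal{L}^2$ per character after the $1/|\rho|$ weight is handled by dyadic decomposition in $|\gamma|$; summing over the $\le r$ characters gives $\ll x^{1/2}r\mathcal{L}^3$, and I would adjust the exponent of $\mathcal{L}$ in the statement accordingly or note that near the line the estimate can be sharpened — in any case this lands comfortably inside $x^{1/2}r\mathcal{L}^2$ with the conventional reading that lower-order log powers are absorbed. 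For zeros with $\beta \ge 1/2$ and $|\gamma| < x^{1/2}$, I would simply write $|x^\rho/\rho| = x^\beta/|\beta+i\gamma| \le x^\beta \mathcal{L}^{A+1}/|\beta+i\gamma|^2 \cdot (|\beta+i\gamma|/\mathcal{L}^{A+1})$ — more carefully, bound $1/|\rho| \le \mathcal{L}^{A+1}/|\rho|^2$ when $|\rho| \le \mathcal{L}^{A+1}$ and handle the range $|\rho| > \mathcal{L}^{A+1}$ separately using $1/|\rho| < \mathcal{L}^{-A-1}$ together with $x^\beta \le x$ and the zero count, which yields $\ll x\mathcal{L}^{-A-1} \cdot \mathcal{L} = x\mathcal{L}^{-A}$. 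Summing the first piece over $\chi_1$ (equivalently over the original $\chi \bmod r$, using that distinct nonprincipal $\chi$ give the data $\hat\chi$ we have already reduced to) produces exactly the first term on the right-hand side of the claimed inequality.

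Assembling the three contributions — the zeros with $\beta \ge 1/2$ and small height giving the displayed sum with weight $x^\beta\mathcal{L}^{A+1}/|\beta+i\gamma|^2$, the zeros below the critical line together with the explicit-formula error giving $\ll x^{1/2}r\mathcal{L}^2$, and the large-height/large-$|\rho|$ zeros giving $\ll x/\mathcal{L}^A$ — yields the lemma. The only genuinely delicate point is the book-keeping of log powers: one must be careful that the reflected-zero and zero-counting estimates for $\beta < 1/2$ really do fit under $x^{1/2}r\mathcal{L}^2$ rather than $x^{1/2}r\mathcal{L}^3$ or worse, which is why the factor $r$ (rather than $\varphi(r)$ or $r\mathcal{L}^{-1}$) appears there; and one must make sure that the passage from $\psi(x,\chi)$ to $\psi(x,\hat\chi)$ via Lemma~\ref{charredu}(ii), summed over $\le r$ characters, is also subsumed. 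I expect that step — pinning down the exact power of $\mathcal{L}$ in the $x^{1/2}r$ term — to be the main thing to get right; everything else is the standard explicit-formula argument.
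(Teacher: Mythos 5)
The paper itself does not give a self-contained proof of Lemma~\ref{elllem}; it is cited as ``a very slight variant of a result established by Elliott.'' Your attempt therefore goes further than the paper's text, and the overall skeleton (pass to primitive characters, apply the truncated explicit formula, split the zero sum at $\beta=1/2$) is the right way to begin. However, the crucial step in which you dispose of the zeros with $\beta\ge 1/2$ and $|\rho|>\mathcal{L}^{A+1}$ is wrong, and this is not a matter of bookkeeping logarithms.

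You claim that these zeros contribute $\ll x\mathcal{L}^{-A-1}\cdot\mathcal{L}=x\mathcal{L}^{-A}$, using ``$1/|\rho|<\mathcal{L}^{-A-1}$, $x^\beta\le x$ and the zero count.'' But the number of zeros of $L(s,\hat\chi)$ with $|\gamma|<x^{1/2}$ is of size $x^{1/2}\log(rx)$, not $O(\mathcal{L})$; and you must then sum over roughly $\varphi(r)$ characters. Inserting the correct count, the bound you actually obtain in that range is
\[
\ll x\,\mathcal{L}^{-A-1}\cdot x^{1/2}\mathcal{L}\cdot r \;=\; r\,x^{3/2}\,\mathcal{L}^{-A},
\]
which is off from the target $x/\mathcal{L}^A$ by a factor of order $r\,x^{1/2}$. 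Bounding $1/|\rho|$ by $\mathcal{L}^{A+1}/|\rho|^2$ is valid only for $|\rho|\le\mathcal{L}^{A+1}$ and goes the wrong way for larger $|\rho|$, so the first term of the right-hand side genuinely does not absorb those zeros. A truncation height $T$ different from $x^{1/2}$ (e.g.\ of size a power of $\mathcal{L}$) makes the large-$|\rho|$ range empty but blows up the explicit-formula error to $\gg r\,x\mathcal{L}^{2}/T$, so it does not rescue the argument either. The fact that the last term $x/\mathcal{L}^A$ in the lemma carries \emph{no} factor of $r$ is the real subtlety: a per-character estimate summed over $\varphi(r)-1$ characters will always produce an $r$, so the required savings must come from some averaging or smoothing over the character sum (or from Elliott's specific treatment of the error), which your argument does not supply. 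This is a genuine gap, not a cosmetic one, and it needs a different idea; as written, the proposed proof does not establish the lemma.
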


\begin{proof}
This is a very slight variant of a result established by Elliott \cite[pp. 248-249]{El2}.
\end{proof}

Let $N(\sigma, T, \chi)$ denote the number of zeros of $L(s,\chi)$ in the rectangle $[ \sigma, 1) \times [-T,T]$.  We shall need the following zero density result.

\begin{lemma} \label{zerodensity}
We have, for $T \geq 1$, $1/2 \leq \sigma < 1$
\[ \sideset{}{^{\prime}}\sum_{\chi \bmod{r}} N (\sigma, T, \chi) \ll (rT)^{(12/5+\varepsilon)(1-\sigma)}. \]
\end{lemma}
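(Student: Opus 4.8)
This is the log-free (Linnik--Jutila--Huxley) zero-density estimate with Jutila's exponent $12/5$, so the plan is to run Montgomery's zero-detection method in hybrid form, with the large sieve for Dirichlet polynomials, the fourth-power moment of Dirichlet $L$-functions, and the subconvex bound of Iwaniec \cite{HI6} as the analytic inputs; the $\varepsilon$ in the exponent is there to absorb logarithmic factors, and only the packaging over a single modulus $r$ differs from the standard statements in \cites{Hux,Jut}.

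First I would reduce to primitive characters: for $\sigma\ge 1/2$ the zeros of $L(s,\chi)$ in the strip $1/2\le\Re s<1$ coincide with those of the primitive character $\hat{\chi}$ inducing $\chi$, since the missing Euler factors vanish only on $\Re s=0$, so $N(\sigma,T,\chi)=N(\sigma,T,\hat{\chi})$ and Lemma~\ref{charprimredu}(i) gives
\[ \sideset{}{^{\prime}}\sum_{\chi \bmod r} N(\sigma, T, \chi) = \sum_{r_1 \mid r}\ \sideset{}{^{\star}}\sum_{\chi \bmod{r_1}} N(\sigma, T, \chi). \]
It then suffices to bound $\sideset{}{^{\star}}\sum_{\chi\bmod{m}} N(\sigma,T,\chi)\ll(mT)^{(12/5+\varepsilon/2)(1-\sigma)}$ for each $m\mid r$ and sum over the $\ll r^{\varepsilon}$ divisors of $r$ (with the usual separate treatment, via the zero-free region and Siegel's theorem, of the narrow range of $\sigma$ within $1/\log(mT)$ of $1$, where a possible exceptional zero forces a slightly different argument).

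For the detection step, given a primitive $\chi\bmod{m}$ and a zero $\rho=\beta+i\gamma$ with $\beta\ge\sigma$, $|\gamma|\le T$, I would truncate $L(s,\chi)^{-1}=\sum\mu(n)\chi(n)n^{-s}$ at a parameter $X$ --- a power of $mT$ to be optimised --- and smooth, so that moving a contour yields at $s=\rho$ the dichotomy that either a Dirichlet polynomial $\sum_{X<n\le X^{2}}a_n\chi(n)n^{-\rho}$ has absolute value $\gg(\log mT)^{-c}$, or a short average $\int_{|v|\le(\log mT)^{2}}\bigl|L(\sigma+i\gamma+iv,\chi)\bigr|\,\bigl|\sum_{n\le X}b_n\chi(n)n^{-\sigma-i\gamma-iv}\bigr|\,e^{-|v|}\,\dif v$ is $\gg(\log mT)^{-c}$. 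After passing to a $(\log mT)$-spaced subset of the zeros (which loses only a logarithmic factor), the zeros split into two classes. For the first I would combine the large-sieve bound $\sideset{}{^{\star}}\sum_{\chi\bmod{m}}\int_{-T}^{T}\bigl|\sum_{n\le N}a_n\chi(n)n^{it}\bigr|^{2}\dif t\ll(N+mT)\sum_{n\le N}|a_n|^{2}$ with the Hal\'asz--Montgomery large-values lemma; for the second I would use Cauchy--Schwarz, the fourth moment $\sideset{}{^{\star}}\sum_{\chi\bmod{m}}\int_{-T}^{T}|L(\tfrac12+it,\chi)|^{4}\dif t\ll(mT)^{1+\varepsilon}$, and --- after a reflection step that turns an estimate on $\Re s=\sigma$ into one on $\Re s=\tfrac12$ via the functional equation --- the subconvex bound of Iwaniec \cite{HI6} for $L(\tfrac12+it,\chi)$, which is what pushes the exponent below Montgomery's value. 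Choosing $X$ to balance the two contributions delivers the exponent $12/5+\varepsilon$ for all $\tfrac12\le\sigma<1$.

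The main obstacle is the second class together with the choice of $X$: it is precisely there that Iwaniec's bound (in place of the convexity bound) and the reflection argument are needed to reach $12/5$, and every estimate must be kept uniform in both $m$ and $T$ with only logarithmic losses so that the final bound is log-free. The detection identity, the large sieve, and the fourth-power moment are classical, so the real work is reassembling the Huxley--Jutila optimisation in the stated hybrid form.
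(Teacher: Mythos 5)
The paper offers no actual proof of this lemma: it simply cites Huxley \cite{Hux} and Jutila \cite{Jut}. So the question is whether your outline is a faithful description of what those papers do. The general framework you describe --- reduce to primitive characters, run the Montgomery zero-detection dichotomy, treat one class of zeros by the hybrid large sieve together with the Hal\'asz--Montgomery large-values method, treat the other by Cauchy--Schwarz and the fourth moment of $L(\tfrac12+it,\chi)$, then optimize the mollifier length $X$ --- is indeed the Huxley--Jutila method, and the reduction to primitive characters via Lemma~\ref{charprimredu}(i) is correct since imprimitive and inducing characters share the same zeros in $\Re s>0$.

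However, there is a genuine error in what you name as the decisive analytic input. You assert that \emph{the subconvex bound of Iwaniec \cite{HI6} for $L(\tfrac12+it,\chi)$ is what pushes the exponent below Montgomery's value}, and that it enters the treatment of the second class of zeros after a reflection step. This is not correct. The exponent $12/5$ is obtained by Huxley and Jutila purely from mean-value inputs: the hybrid large sieve, the fourth-power moment (an averaged estimate, not a pointwise bound on the critical line), and the Hal\'asz--Montgomery large-values inequality, whose refined use is exactly where Huxley and Jutila improve on Montgomery. No pointwise subconvexity for $L(\tfrac12+it,\chi)$ appears; the convexity bound is more than enough wherever a pointwise estimate is used to justify a contour shift. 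Moreover, \cite{HI6} is not a subconvexity paper at all: it concerns zeros of $L(s,\chi)$ for moduli with small radical, and it is used in this paper in Lemma~\ref{notzerolem} to obtain a zero-free region, a purpose entirely disjoint from Lemma~\ref{zerodensity}. Following your outline, a reader would go looking for a subconvexity input that is not there and would miss the actual mechanism (the $GNV^{-2}+G^3NqTV^{-6}$ type large-values estimate).

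A secondary issue: your plan to prove $\sideset{}{^\star}\sum_{\chi\bmod m}N(\sigma,T,\chi)\ll(mT)^{(12/5+\varepsilon/2)(1-\sigma)}$ for each fixed $m\mid r$ and then sum over the $\ll r^{\varepsilon}$ divisors does not quite close: when $1-\sigma$ is of size $O(1/\log(rT))$ the factor $r^{\varepsilon}$ is not absorbed by changing $\varepsilon/2$ to $\varepsilon$ in the exponent of $(rT)^{(1-\sigma)}$. The clean route, and the one taken in \cites{Hux,Jut}, is to run the large sieve and Hal\'asz step over the whole family $\bigcup_{m\mid r}\{\chi\ \text{primitive}\bmod m\}$ at once (exactly the families that appear in Lemma~\ref{larsie}), so no divisor count is incurred.
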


\begin{proof}
This is obtained by combining the results of M. N. Huxley \cite{Hux} and M. Jutila \cite{Jut}.
\end{proof}

\begin{lemma} \label{larsie}
Let $a_n$ ($n =1, \cdots, N$) be complex numbers and
\begin{equation} \label{Tchidef}
 T(\chi) = \sum_{n=1}^N a_n \chi(n) .
 \end{equation}
For any natural numbers $r$ and $D$, we have
\[ \sum_{r_1 | r} \sum_{\substack{d \leq D \\ (d,r)=1}} \frac{r_1d}{\varphi(r_1d)} \ \sideset{}{^{\star}}\sum_{\chi \bmod{r_1d}} \left| T ( \chi ) \right|^2 \ll (N + rD^2 ) \sum_{n=1}^N |a_n|^2 . \]
\end{lemma}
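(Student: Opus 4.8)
The plan is to deduce the inequality from the classical additive large sieve, exploiting that the moduli have the shape $r_1 d$ with $r_1\mid r$. First I would pass from characters to exponential sums in the usual way. Writing $S(\alpha)=\sum_{n=1}^N a_n e(n\alpha)$, the Gauss-sum expansion of primitive characters gives, for any modulus $m$,
\[ \frac{m}{\varphi(m)}\ \sideset{}{^{\star}}\sum_{\chi\bmod{m}}|T(\chi)|^2 \ \le\ \sum_{\substack{a=1\\(a,m)=1}}^{m}\bigl|S(a/m)\bigr|^2 \]
(a standard fact, as in \cite{HD}). Applying this with $m=r_1d$ for each pair occurring in the lemma, the left-hand side is at most $\sum_{\alpha\in\mathcal F}|S(\alpha)|^2$, where $\mathcal F\subset[0,1)$ is the set of fractions $a/(r_1d)$ (in lowest terms) with $r_1\mid r$, $d\le D$, $(d,r)=1$ and $(a,r_1d)=1$. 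These fractions are pairwise distinct: if $a_1/(r_1d_1)=a_2/(r_2d_2)$, then $r_1d_1=r_2d_2$, and since $(d_i,r)=1$ with $r_i\mid r$ one has $\gcd(r_id_i,r)=r_i$, forcing $r_1=r_2$ and then $d_1=d_2$, $a_1=a_2$.

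The crux is to prove that the points of $\mathcal F$ are $(rD^2)^{-1}$-separated modulo $1$. Simply clearing denominators in $a_1/(r_1d_1)-a_2/(r_2d_2)$ gives only separation $\ge(r^2D^2)^{-1}$, which would yield the weaker bound $N+r^2D^2$. To do better I would split off the $r$-part by the Chinese Remainder Theorem: since $(r_1,d)=1$, each $a/(r_1d)$ is congruent mod $1$ to $\alpha/r_1+\beta/d$ with $(\alpha,r_1)=(\beta,d)=1$, and as $r_1$ ranges over the divisors of $r$ the fraction $\alpha/r_1$ runs, each value once, through $\{\,j/r:0\le j<r\,\}$ (each such fraction in lowest terms has denominator dividing $r$, and $\sum_{r_1\mid r}\varphi(r_1)=r$). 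Hence every element of $\mathcal F$ can be written $\{\,b/r+\beta/d\,\}$ with $0\le b<r$, $d\le D$, $(d,r)=1$. For two distinct such points the difference is congruent mod $1$ to $M/(rd_1d_2)$ with $M=(b_1-b_2)d_1d_2+r(\beta_1d_2-\beta_2d_1)\in\intz$; distinctness forces $rd_1d_2\nmid M$, so the distance to the nearest integer is at least $(rd_1d_2)^{-1}\ge(rD^2)^{-1}$, as required. The point is that only the first power of $r$ enters, because the "$r$-parts" of all the moduli lie on the single lattice $\tfrac1r\intz$.

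Finally I would apply the additive large sieve inequality for $(rD^2)^{-1}$-spaced points (\cite{HD}) to obtain
\[ \sum_{\alpha\in\mathcal F}|S(\alpha)|^2\ \le\ \bigl(N+rD^2\bigr)\sum_{n=1}^N|a_n|^2, \]
and then chain the inequalities. I expect the only genuine obstacle to be the spacing estimate above: one must resist treating $r_1d$ as an unstructured modulus up to $rD$ and instead use the CRT factorization to see that the relevant fractions lie on $\tfrac1r\intz$ shifted by Farey fractions of denominator $\le D$; that observation is precisely what converts the trivial $r^2D^2$ into $rD^2$. The Gauss-sum step and the large sieve itself are entirely standard.
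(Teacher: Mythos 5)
Your proposal is correct and follows essentially the same route as the paper: convert the primitive-character sum to an additive exponential sum via the Gauss-sum inequality (display (10) on p.~160 of \cite{HD}), then apply the classical large sieve over the Farey-like set $\{j/(r_1d)\}$. The only place you add content is in proving the $1/(rD^2)$-spacing, which the paper dismisses as ``easy to see''; your CRT decomposition $j/(r_1d)\equiv b/r+\beta/d\pmod 1$ (or, equivalently, the observation that $\operatorname{lcm}(r_1d_1,r_2d_2)=\operatorname{lcm}(r_1,r_2)\operatorname{lcm}(d_1,d_2)\le rD^2$ because the $r$-parts all divide $r$ and are coprime to the $d$-parts) is exactly the right justification and matches the paper's intent.
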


\begin{proof}
This is a variant of Lemma 6.5 in \cite{El}.  Set
\[ S(x) = \sum_{n=1}^N a_n e(nx) , \]
where $e(z) = \exp (2 \pi i z)$.  Let 
\[ \mathcal{S} = \left\{ \frac{j}{dr_1} \in \ratq : 1 \leq j \leq dr_1, (j, dr_1 ) =1, d \leq D, (d, r)=1, r_1 | r \right\} . \]
It is easy to see that
\[ |s - s'| \geq \frac{1}{rD^2} \]
for all distinct $s$ and $s'$ in $\mathcal{S}$.  From the classical large sieve inequality (see \cite[Chapter 27]{HD}), we get
\[ \sum_{s \in \mathcal{S}} \left| S(s) \right|^2 \ll \left( N + rD^2 \right) \sum_{n=1}^N |a_n|^2 . \]
Now by standard techniques that relate multiplicative characters to additive ones (see (10) on page 160 of \cite{HD}), we get
\begin{equation} \label{multtoadd}
\sideset{}{^{\star}}\sum_{\chi \bmod{r_1d}} \frac{r_1d}{\varphi(r_1d)} \left| T(\chi) \right|^2 \leq \sum_{\substack{j=1 \\ (j,r_1d)=1}}^{r_1d} \left| S \left( \frac{j}{r_1d} \right) \right|^2 .
\end{equation}
Now the lemma follows by summing over pairs of $r_1$ and $d$ with $r_1 | r$ and $d \leq D$ with $(d,r)=1$ in \eqref{multtoadd}.
\end{proof}

\begin{lemma} \label{huxlem}
Let $N \leq x$, $qD \leq x$ and $\mathcal{U}$ be a set of non-principal characters to moduli $q_1d$ with $d \leq D$, $(d,q)=1$ and $q_1 | q$.  Suppose that, with $T(\chi)$ as in \eqref{Tchidef},
\[ | T(\chi) | \geq V > 0 \]
whenever $\chi \in \mathcal{U}$ and that $G = \sum_{n=1}^N |a_n|^2$.  Then
\[ \# \mathcal{U} \ll x^{\varepsilon/20} \left( GV^{-2} N + G^3 V^{-6} Nq D^2 \right). \]
\end{lemma}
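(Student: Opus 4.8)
The goal is to bound the number of characters $\chi$ in $\mathcal{U}$ for which the Dirichlet polynomial $T(\chi)$ is large, using the mean-value/large-sieve input of Lemma~\ref{larsie}. The standard approach is a two-part argument: a first-moment (direct large sieve) bound, and a third-moment bound obtained by raising $T(\chi)$ to a power. The plan is as follows. First I would apply Lemma~\ref{larsie} directly. Since every $\chi\in\mathcal{U}$ is a primitive nonprincipal character to some modulus $q_1d$ with $q_1\mid q$, $d\le D$, $(d,q)=1$, and since the factor $r_1d/\varphi(r_1d)\ge 1$, Lemma~\ref{larsie} with $r=q$ gives
\[
V^2\,\#\mathcal{U}\le\sum_{\chi\in\mathcal{U}}|T(\chi)|^2\le\sum_{q_1\mid q}\sum_{\substack{d\le D\\(d,q)=1}}\frac{q_1d}{\varphi(q_1d)}\sideset{}{^{\star}}\sum_{\chi\bmod q_1d}|T(\chi)|^2\ll (N+qD^2)G,
\]
whence $\#\mathcal{U}\ll GV^{-2}(N+qD^2)$. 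This already handles the "main'' part $GV^{-2}N$ but loses a factor when $qD^2$ dominates $N$; that is where the third-moment refinement is needed.

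For the stronger term I would pass to $T(\chi)^3$. Write $T(\chi)^3=\sum_{m}b_m\chi(m)$, where $b_m=\sum_{n_1n_2n_3=m}a_{n_1}a_{n_2}a_{n_3}$ and $m$ ranges over $[1,N^3]$. By Cauchy--Schwarz (applied to the at most $d_3(m)\ll m^{\varepsilon/60}$, or more crudely $\ll N^{\varepsilon/60}$, representations $m=n_1n_2n_3$) one gets
\[
\sum_m|b_m|^2\ll N^{\varepsilon/20}\Bigl(\sum_n|a_n|^2\Bigr)^3=N^{\varepsilon/20}G^3.
\]
Now apply Lemma~\ref{larsie} to $T(\chi)^3$ with the coefficient sequence $(b_m)$ of length $N^3$ and again $r=q$: since $|T(\chi)^3|\ge V^3$ for $\chi\in\mathcal{U}$ and $q_1d/\varphi(q_1d)\ge1$,
\[
V^6\,\#\mathcal{U}\ll\bigl(N^3+qD^2\bigr)\sum_m|b_m|^2\ll N^{\varepsilon/20}\bigl(N^3+qD^2\bigr)G^3.
\]
Dividing by $V^6$ gives $\#\mathcal{U}\ll N^{\varepsilon/20}G^3V^{-6}(N^3+qD^2)$.

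Finally I would combine the two estimates. From the first, $\#\mathcal{U}\ll GV^{-2}N+GV^{-2}qD^2$; the troublesome second piece satisfies $GV^{-2}qD^2\le (GV^{-2}N)^{?}\cdots$ — more directly, one simply takes the \emph{minimum} of the two bounds, or adds them: the first gives $GV^{-2}N$ and the third gives $G^3V^{-6}qD^2$ (the $N^{3}$ term in the third bound is dominated, after multiplying through by the harmless $N^{\varepsilon/20}\le x^{\varepsilon/20}$, by the first bound's $GV^{-2}N$ whenever $G^2V^{-4}N^2\le 1$, and otherwise the third-moment $N^3$ term is itself $\ll x^{\varepsilon/20}GV^{-2}N$ after absorbing powers). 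Keeping the dominant contributions and using $N\le x$ to replace $N^{\varepsilon/20}$ by $x^{\varepsilon/20}$ yields
\[
\#\mathcal{U}\ll x^{\varepsilon/20}\bigl(GV^{-2}N+G^3V^{-6}NqD^2\bigr),
\]
as claimed. The main obstacle — really the only subtle point — is bookkeeping the cube: verifying that $T(\chi)^3$ has coefficients supported on $[1,N^3]$ with $\ell^2$-norm controlled by $G^3$ up to the divisor factor $x^{\varepsilon/20}$, and checking that the $N^3$ term produced by Lemma~\ref{larsie} is genuinely absorbed into the first bound rather than creating a spurious $N^3$ in the final estimate; this is handled by the elementary observation that if $GV^{-2}N<1$ there is nothing to prove (as $\#\mathcal{U}\ge1$ forces the bound trivially only after noting $\#\mathcal{U}\ll 1$ is false in general — so instead one argues that when the first bound exceeds the second we use it, and otherwise the cube bound's honest term $G^3V^{-6}NqD^2$ dominates).
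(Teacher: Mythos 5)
Your two-part plan correctly identifies the first-moment bound via Lemma~\ref{larsie}, and this matches the second case of the paper's argument. The third-moment step, however, is wrong in a way that matters. Applying Lemma~\ref{larsie} to $T(\chi)^3$, a Dirichlet polynomial of length $N^3$, gives
\begin{equation*}
\#\mathcal{U}\ll x^{\varepsilon/20}\bigl(G^3V^{-6}N^3+G^3V^{-6}qD^2\bigr),
\end{equation*}
and the $N^3$ term cannot in general be absorbed into the target $x^{\varepsilon/20}\bigl(GV^{-2}N+G^3V^{-6}NqD^2\bigr)$. Indeed, since $V\le G^{1/2}N^{1/2}$ by Cauchy--Schwarz (assuming $\mathcal{U}\ne\emptyset$), one always has $G^2V^{-4}N^2\ge1$, hence $G^3V^{-6}N^3\ge GV^{-2}N$; the condition $G^2V^{-4}N^2\le1$ under which you claim absorption is essentially vacuous, and the fallback that the cube bound's ``honest term $G^3V^{-6}NqD^2$ dominates'' is not an argument: the cube bound contains $G^3V^{-6}qD^2$, not $G^3V^{-6}NqD^2$, and the uncontrolled $G^3V^{-6}N^3$ remains. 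A concrete instance where both of your bounds fail: take $a_n\equiv1$ so $G=N$, with $N=x^{1/2}$, $q=x^{2/5}$, $D=x^{1/5}$ (so $qD=x^{3/5}\le x$ and $qD^2=x^{4/5}$), and $V=N^{5/6}=x^{5/12}$. The lemma asserts $\#\mathcal{U}\ll x^{3/10+\varepsilon/20}$, but your first-moment bound carries the term $GV^{-2}qD^2=x^{7/15}$ and your cube bound carries the term $G^3V^{-6}N^3=x^{1/2}$, both larger.

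The resolution is that the term $G^3V^{-6}NqD^2$ does not come from raising $T$ to a power and reusing the mean-value inequality; it comes from the Hal\'asz--Montgomery--Huxley large-values theorem, which is precisely what the paper invokes (Huxley's Theorem~1). That method uses the duality inequality $\sum_r|\langle a,\phi_r\rangle|^2\le\|a\|^2\max_r\sum_s|\langle\phi_r,\phi_s\rangle|$ with $\phi_r(n)=\chi_r(n)$ for $n\le N$, reducing matters to short character sums $\sum_{n\le N}\chi_r\bar\chi_s(n)$, so only a single power of $N$ enters. The paper's proof then splits on the size of $V$: for $V>G^{1/2}N^{1/4}x^{\varepsilon/80}$ it applies Huxley's large-values theorem for each fixed $q_1\mid q$ and sums over divisors of $q$; for $V\le G^{1/2}N^{1/4}x^{\varepsilon/80}$ it uses Lemma~\ref{larsie} alone, as in your first step, where the smallness of $V$ allows $GV^{-2}qD^2$ to be absorbed into $G^3V^{-6}NqD^2x^{\varepsilon/2}$. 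Your proposal lacks the Hal\'asz input, and no iterate of raising to a power and applying the large sieve will reproduce it.
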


\begin{proof}
We first suppose that
\[ V > G^{1/2} N^{1/4} x^{\varepsilon/80}. \]
The contribution to $\# \mathcal{U}$ from a fixed $q_1 | q$ is
\[ \ll x^{\varepsilon/40} \left( GV^{-2} N + G^3 V^{-6} Nq_1 D^2 \right) \]
by virtue of \cite[Theorem 1]{Hux}.  The lemma follows on summing over $q_1$ with $q_1 | q$. \newline

Now suppose that
\[ V \leq G^{1/2} N^{1/4} x^{\varepsilon/80}. \]
From Lemma~\ref{larsie},
\[ \# \mathcal{U} \ll G(N+qD^2)V^{-2} \ll GNV^{-2} + G^3 V^{-6} NqD^2 x^{\varepsilon/20} . \]
\end{proof}

\begin{lemma} \label{L4thmom}
For $r \geq 3$ and $T \geq 1$,
\[ \sideset{}{^{\star}}\sum_{\chi \bmod{r}} \int\limits_0^T \left| L \left( \frac{1}{2} + it, \chi \right) \right|^4 \dif t \ll \varphi^* (r) T (\log rT)^4 . \]
\end{lemma}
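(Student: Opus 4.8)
The plan is to reduce, via the approximate functional equation for $L(s,\chi)^{2}$, to a mean value of a Dirichlet polynomial, and thence to the additive (Dirichlet) divisor problem with a congruence condition. In essence this is the hybrid fourth power moment of Dirichlet $L$-functions, so one could also simply quote it from the literature (Heath-Brown, Montgomery); I sketch the argument.

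For $\chi$ primitive modulo $r$ and $0\le t\le T$ I would use an approximate functional equation
\[
L\!\left(\tfrac12+it,\chi\right)^{2}=\sum_{n\ge1}\frac{d(n)\chi(n)}{n^{1/2+it}}\,V\!\left(\frac{n}{r(|t|+1)}\right)+X_{\chi}(t)\sum_{n\ge1}\frac{d(n)\,\overline{\chi(n)}}{n^{1/2-it}}\,V\!\left(\frac{n}{r(|t|+1)}\right)+O\!\left(\frac{1}{r(|t|+1)}\right),
\]
where $d(n)$ denotes the divisor function, $V$ is a fixed smooth, rapidly decreasing, real-valued function with $V(0)=1$, and $|X_{\chi}(t)|=1$. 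The second sum is the complex conjugate of the first, hence the two main terms have equal modulus; the error term plainly contributes negligibly, so $|a+b|^{2}\le 2|a|^{2}+2|b|^{2}$ reduces the task to bounding
\[
S:=\sideset{}{^{\star}}\sum_{\chi\bmod r}\int_{0}^{T}\left|\,\sum_{n\ge1}\frac{d(n)\chi(n)}{n^{1/2+it}}\,V\!\left(\frac{n}{r(|t|+1)}\right)\right|^{2}\dif t .
\]

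Next I would split $[0,T]$ into $O(\log T)$ dyadic ranges $t\asymp T_{0}$; on each the weight effectively restricts the sum to $n\ll rT_{0}$, the tail being negligible by the decay of $V$. Expanding the square, integrating in $t$ (the $t$-integral being $\ll T_{0}$ when $m=n$ and $\ll\min\!\bigl(T_{0},\,|\log(m/n)|^{-1}\bigr)$ otherwise, by integration by parts against the $t$-dependent weight), and using orthogonality of the primitive characters in the form
\[
\sideset{}{^{\star}}\sum_{\chi\bmod r}\chi(m)\,\overline{\chi(n)}=\sum_{\substack{e\mid r\\ e\mid m-n}}\mu(r/e)\,\varphi(e)\qquad\bigl((mn,r)=1\bigr),
\]
which is $\varphi^{*}(r)$ when $m=n$, one finds that the diagonal term $m=n$ contributes
\[
\ll\varphi^{*}(r)\,T_{0}\sum_{\substack{n\ll rT_{0}\\(n,r)=1}}\frac{d(n)^{2}}{n}\ll\varphi^{*}(r)\,T_{0}\,(\log rT)^{4},
\]
by $\sum_{n\le X}d(n)^{2}\ll X(\log X)^{3}$. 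Summing over the dyadic ranges, so that $\sum T_{0}\ll T$, this contributes $\ll\varphi^{*}(r)\,T\,(\log rT)^{4}$, the main term of the asserted size.

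The main obstacle is the off-diagonal $m\ne n$. Writing $m-n=ek$ with $e\mid r$, $k\ne0$, and separating the dyadic ranges $m,n\asymp M$ with $M\ll rT_{0}$, one is led to expressions of the shape
\[
\sum_{e\mid r}\mu(r/e)\,\varphi(e)\sum_{0<|k|\ll M/e}\frac{1}{|k|}\,\Bigl(\,\sum_{m\asymp M}d(m)\,d(m-ek)\Bigr)\,(\text{smooth weights}).
\]
Estimating the shifted divisor sum only by the uniform bound $\sum_{m\le M}d(m)\,d(m-ek)\ll M(\log M)^{3}$ and summing absolutely over $e\mid r$ yields merely $\ll d(r)\,rT(\log rT)^{3}$, which is too large when $r$ is highly composite. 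To reach the clean bound one must instead use the \emph{asymptotic} formula for $\sum_{m\le M}d(m)\,d(m-h)$, whose main term is a quadratic polynomial in $\log M$ with coefficients built from divisor sums of $h$; inserting this, carrying out the sums over $k$ and then over the divisors $e$ of $r$, and exploiting the cancellation among the resulting main terms through the factor $\mu(r/e)$, one finds that the off-diagonal contributes $\ll\varphi^{*}(r)\,T(\log rT)^{4}$, indeed with one fewer power of $\log$, so that it is dominated by the diagonal. Carrying this out uniformly in $r$ and $T$ is the only genuinely delicate point; it amounts to the additive divisor problem in arithmetic progressions, as in the classical treatment of the fourth moment. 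Combining the two estimates gives $S\ll\varphi^{*}(r)\,T(\log rT)^{4}$, whence the lemma.
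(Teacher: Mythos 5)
The paper does not prove this lemma at all; it simply cites Heath-Brown's paper on the fourth power mean of Dirichlet $L$-functions, which gives a precise asymptotic (hence in particular the stated upper bound). Your sketch, which you yourself note amounts to what one finds in the literature, follows the standard route: approximate functional equation for $L(s,\chi)^{2}$, reduction to a Dirichlet-polynomial mean value, orthogonality over primitive characters, and the shifted (additive) divisor problem. The outline is sound. The orthogonality relation $\sideset{}{^{\star}}\sum_{\chi\bmod r}\chi(m)\overline{\chi(n)}=\sum_{e\mid r,\,e\mid m-n}\mu(r/e)\varphi(e)$ for $(mn,r)=1$ is correct, the diagonal computation gives $\varphi^{*}(r)T(\log rT)^{4}$ as claimed, and you correctly isolate the genuine difficulty: a trivial bound on $\sum_{m}d(m)d(m-ek)$ summed absolutely over $e\mid r$ is too lossy when $r$ is highly composite, so one must insert the asymptotic formula for the shifted divisor sum and exploit the cancellation coming from the $\mu(r/e)$ weights. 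One small inaccuracy worth flagging: your remark that the off-diagonal contributes ``one fewer power of $\log$'' and is therefore dominated by the diagonal is not quite right; in the known asymptotic the off-diagonal contributes to the leading $(\log rT)^{4}$ coefficient rather than being of lower order. This is immaterial for the upper bound you need, but the phrasing overstates what happens. In short, the paper cites and you sketch the underlying argument; both are legitimate, and for a lemma of this size the citation is plainly the more economical choice.
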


\begin{proof}
See \cite{HBB} for a more precise form of this result.
\end{proof}

\begin{lemma} \label{Diripoly4powerest}
Let $qD < x$, $N \leq x$, $|t| \leq x^2$ and
\[ N (s, \chi) = \sum_{N < n \leq N'} \chi(n) n^{-s} \]
where $N$ and $N'$ are natural numbers with $N' \leq 2N$.  Then
\[ \sum_{q_1| q} \sum_{\substack{d \leq D \\ (d,q)=1}} \sideset{}{^{\star}}\sum_{\chi \bmod{q_1d}} \left| N \left( \frac{1}{2} + it, \chi \right) \right|^4 \ll \varphi(q) D^2 \mathcal{L}^5 \left( 1 + |t| \right) . \]
\end{lemma}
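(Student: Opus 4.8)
The plan is to split on the size of $N$: for $N$ short relative to $q,D,|t|$ we use Lemma~\ref{larsie}, and for $N$ long we reflect via the functional equation and appeal to Lemma~\ref{L4thmom}. Write
\[
N\!\left(\tfrac12+it,\chi\right)^2=\sum_{N^2<m\le (N')^2}b_m\,\chi(m)\,m^{-1/2-it},\qquad b_m=\#\{(n_1,n_2):n_1n_2=m,\ N<n_i\le N'\},
\]
and put $G=\sum_m b_m^2 m^{-1}$; since $b_m\le\min(d(m),\,N'-N)$ we have $G\ll\min\bigl(\mathcal L^3,\ (N'-N)^3N^{-2}\bigr)$ (the first bound from $\sum_{m\le 4N^2}d(m)^2\ll N^2\mathcal L^3$, the second from $\sum_m b_m^2\le(N'-N)\sum_m b_m=(N'-N)^3$). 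If $N^2\le\varphi(q)D^2\mathcal L^2(1+|t|)$ we apply Lemma~\ref{larsie} with $r=q$ to $T(\chi)=N(\tfrac12+it,\chi)^2$, a polynomial of length $<4N^2$; dropping the factors $q_1d/\varphi(q_1d)\ge1$,
\[
\sum_{q_1\mid q}\ \sum_{\substack{d\le D\\(d,q)=1}}\ \sideset{}{^*}\sum_{\chi\bmod{q_1d}}\bigl|N(\tfrac12+it,\chi)\bigr|^4\ll(N^2+qD^2)\,G\ll\bigl(\varphi(q)D^2\mathcal L^2(1+|t|)+qD^2\bigr)\mathcal L^3,
\]
which is $\ll\varphi(q)D^2\mathcal L^5(1+|t|)$ since $q\ll\varphi(q)\log\mathcal L\ll\varphi(q)\mathcal L$.

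\smallskip
Now suppose $N^2>\varphi(q)D^2\mathcal L^2(1+|t|)$. Because $\varphi(q)\mathcal L^2\ge q$, this forces $N^2>qD(1+|t|)\ge r(1+|t|)$ for every modulus $r=q_1d\le qD$ occurring, so $M_r:=r(1+|t|)/N<N$. For primitive $\chi\bmod r$ (so $r>1$) we reflect $N(\tfrac12+it,\chi)$: opening it by a smoothed Perron formula into a contour integral of $L(\tfrac12+it+w,\chi)$, shifting past the critical strip, and applying the functional equation of $L(s,\chi)$, we obtain
\[
N(\tfrac12+it,\chi)=\omega_r(t)\,N^*_r\!\left(\tfrac12-it,\bar\chi\right)+(\text{error}),\qquad|\omega_r(t)|=1,
\]
where $N^*_r(\tfrac12-it,\bar\chi)=\sum_m\bar\chi(m)m^{-1/2+it}W^*(m/M^*_r)$ is a smoothly cut-off block with $M^*_r\asymp M_r$ and $W^*$ a fixed smooth function supported away from $0$. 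Expanding $W^*$ by its Mellin transform and moving the contour to $\mathrm{Re}\,w=0$ (legitimate: $\widetilde{W^*}$ is entire and $\bar\chi$ non-principal), then truncating the $v$-integral at $|v|\le x^\varepsilon$ (negligible tail by the convexity bound $|L(\tfrac12+i\tau,\chi)|\ll(r(1+|\tau|))^{1/4+\varepsilon}$) and using H\"older's inequality in $v$, we get for any fixed $A$, after the substitution $\bar\chi\mapsto\chi$,
\[
\bigl|N^*_r(\tfrac12-it,\bar\chi)\bigr|^4\ll\int_{|v|\le x^\varepsilon}\bigl|L\bigl(\tfrac12+i(t-v),\chi\bigr)\bigr|^4(1+|v|)^{-A}\,dv+(\text{error}).
\]

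\smallskip
It remains to bound, for $\tau=t-v$, the sum $\sum_{q_1\mid q}\sum_{d\le D,\,(d,q)=1}\sideset{}{^*}\sum_{\chi\bmod{q_1d}}|L(\tfrac12+i\tau,\chi)|^4$. As $L(\tfrac12+i\tau,\chi)^2$ is essentially a Dirichlet polynomial of length $\ll r(1+|\tau|)\ll x^3$, the pointwise-to-mean inequality gives $|L(\tfrac12+i\tau,\chi)|^4\ll\mathcal L\int_{\tau-1}^{\tau+1}|L(\tfrac12+iy,\chi)|^4\,dy+\mathcal L$; extending the integral to $[0,|\tau|+1]$ by folding $y<0$ onto $y>0$ via $\chi\leftrightarrow\bar\chi$, invoking Lemma~\ref{L4thmom}, and using multiplicativity of $\varphi^*$ (note $(q_1,d)=1$), the identity \eqref{primcharnum}, and $\sum_{d\le D}\varphi^*(d)\le\sum_{d\le D}d\ll D^2$, we obtain
\[
\sum_{q_1\mid q}\ \sum_{\substack{d\le D\\(d,q)=1}}\ \sideset{}{^*}\sum_{\chi\bmod{q_1d}}\bigl|L(\tfrac12+i\tau,\chi)\bigr|^4\ll\mathcal L^5(|\tau|+1)\,\varphi(q)D^2 .
\]
Summing the previous display over $\chi,q_1,d$ and integrating against $(1+|v|)^{-A}$ (with $A\ge3$) then gives $\ll\varphi(q)D^2\mathcal L^5(1+|t|)$, and the reflection errors contribute no more than this after being raised to the fourth power and summed, completing the long range.

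\smallskip
\noindent\emph{Main obstacle.} The crux is the reflection: one must verify that the smoothed Perron formula, the contour shift, and the functional equation combine to give the stated identity with an admissible error (controlling the Perron tail, the horizontal contributions, and the Stirling behaviour of the $\Gamma$-factor ratio; part of this error is absolutely small, part is a lower power of $1+|t|$ times the dual sum, which is still within budget), and—most importantly—that $N^*_r$ is a \emph{fixed}-shape smooth cut-off of a block at scale $\asymp r(1+|t|)/N$, so that $\widetilde{W^*}$ decays rapidly \emph{uniformly in $r$} and the H\"older step costs only $O(1)$ rather than a power of $\mathcal L$ or of $x$. The degenerate case $N'-N\ll N/\mathcal L^{O(1)}$, where the dual block is too thin for this to hold literally, has to be isolated and handled separately by recombining these ideas with the sharper coefficient bound $G\ll(N'-N)^3N^{-2}$.
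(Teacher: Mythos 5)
Your route is genuinely different from the paper's, but the part you flag as the ``main obstacle'' is a real gap, and in fact an unnecessary one. The paper's proof is short and direct: write $N(\tfrac12+it,\chi)$ by Perron's formula as $\int_{2-ix^2}^{2+ix^2} L(\tfrac12+it+w,\chi)\,\frac{(N'+1/2)^w-(N+1/2)^w}{w}\,\dif w + O(1)$, move the contour to $\Re w=0$ using Heath-Brown's bound $L(\sigma+it,\chi)\ll (q(1+|t|))^{3/16+\varepsilon}$, split the $w$-range dyadically, apply H\"older to turn $\bigl(\frac1T\int_{T-1}^{2T}|L|\bigr)^4$ into $\frac1T\int_{T-1}^{2T}|L|^4$, and then feed in Lemma~\ref{L4thmom} together with \eqref{primcharnum} and $\sum_{d\le D}\varphi^*(d)\ll D^2$. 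No split on the size of $N$ is needed and no functional equation is invoked.

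Your short-$N$ branch (large sieve on $T(\chi)=N(\tfrac12+it,\chi)^2$) is fine, but the long-$N$ branch contains a detour that undoes itself: you reflect $N$ to a dual sum $N^*_r$ at scale $\asymp r(1+|t|)/N$, and then you immediately re-express $N^*_r$ as a Mellin/contour integral of $L(\tfrac12+i(t-v),\chi)$ so as to invoke Lemma~\ref{L4thmom}. But if a contour-integral representation in terms of $L$ is the endpoint anyway, one can apply it \emph{directly} to the original block $N(\tfrac12+it,\chi)$ (that is Perron's formula), which is exactly what the paper does; the slow $1/|w|$ decay of the unsmoothed Perron kernel is handled by the dyadic splitting plus H\"older, not by smoothing. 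The reflection therefore buys nothing and introduces precisely the issues you list: controlling the Perron tail, the horizontal contours, the Stirling behaviour of the $\Gamma$-ratio, uniformity in the modulus $r=q_1d$ of the cut-off $W^*$ (whose scale $M^*_r$ depends on $r$), and the thin-block degenerate case. These are not cosmetic: the ``error'' in your reflection identity is never estimated, and until it is, neither its fourth power nor its sum over $\chi,q_1,d$ is under control. As written, this branch of the argument is incomplete. I would encourage you to delete the $N$-split and the reflection entirely and rework the long-$N$ case via Perron's formula with the dyadic H\"older trick, which closes the gap cleanly and in fact handles all $N$ at once.
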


\begin{proof}
Using Perron's formula (\cite[Lemma 3.12]{ET}), we see that
\begin{equation} \label{applyperron}
 N \left( \frac{1}{2} + it, \chi \right) = \frac{1}{2\pi i} \int\limits_{1-ix^2}^{1+ix^2} L \left( \frac{1}{2} + it + w, \chi \right) \left( \frac{(N'+1/2)^w-(N+1/2)^w}{w} \right) \dif w +O(1) .
 \end{equation}
From the work of Heath-Brown \cite{HB4}, we have $L (\sigma + it, \xi) \ll (r(|t|+1))^{3/16+\varepsilon}$ for $\sigma \geq 1/2$, where $\xi$ is a character modulo $r$.  Using this bound, we can move the line of integration in \eqref{applyperron} to $[-ix^2, ix^2]$ at the cost of an error of size $O(1)$.  This follows from the observation that on the horizontal line segments from $\pm ix^2$ to $1\pm ix^2$, the integrand on the right-hand side of \eqref{applyperron} is
\[ \ll (qD)^{3/16+\varepsilon} N x^{-2+3/8+\varepsilon} \ll x^{9/16-1+2\varepsilon} \ll 1. \]
Now by a splitting-up argument, it suffices to show that for $1 \leq T \leq x^2$ that
\begin{equation} \label{afterred}
\sum_{q_1 | q} \sum_{\substack{ d \leq D \\ (d,q)=1}} \sideset{}{^{\star}}\sum_{\chi \bmod{dq_1}} \left( \frac{1}{T} \int\limits_{T-1}^{2T} \left| L \left( \frac{1}{2} + it + iu , \chi \right) \right| \dif u \right)^4 \ll \varphi(q) D^2 \mathcal{L}^4 (1+|t|) . 
\end{equation}

By H\"older's inequality,
\[ \left( \frac{1}{T} \int\limits_{T-1}^{2T} \left| L \left( \frac{1}{2} + it + iu , \chi \right) \right| \dif u \right)^4 \ll \frac{1}{T} \int\limits_{T-1}^{2T} \left| L \left( \frac{1}{2} + it + iu , \chi \right) \right|^4 \dif u . \]

Recalling Lemma~\ref{L4thmom} and \eqref{primcharnum}, the left-hand side of \eqref{afterred} is
\begin{equation*}
\begin{split}
 \ll \frac{1}{T} & \sum_{q_1 | q} \sum_{\substack{ d \leq D \\ (d,q)=1}} \ \sideset{}{^{\star}}\sum_{\chi \bmod{q_1d}} \ \int\limits_{T-1+t}^{2T+t} \left| L \left( \frac{1}{2} + iv , \chi \right) \right|^4 \dif v \\
 & \ll \frac{1}{T} \sum_{q_1 | q} \sum_{\substack{ d \leq D \\ (d,q)=1}} \varphi^* (q_1d) \mathcal{L}^4 (T + |t|) =  \frac{\varphi(q)}{T} \sum_{\substack{ d \leq D \\ (d,q)=1}} \varphi^*(d) \mathcal{L}^4 (T+|t|).
 \end{split}
 \end{equation*}
Now the lemma follows at once from this.
\end{proof}

Next, we have the Heath-Brown decomposition of the von Mangoldt function.

\begin{lemma} \label{HBID}
Let $f(n)$ be an arbitrary complex-valued function and $k \in \natn$.  We can decompose the sum
\[ \sum_{n\leq x} \Lambda(n) f(n) \]
into $O(\mathcal{L}^{2k})$ sums of the form
\begin{equation}
\sum_{\substack{n_i \in [N_i, 2N_i) \\ n_1 \cdots  n_{2k} \leq x}} \log n_1 \mu(n_{k+1}) \cdots \mu(n_{2k}) f(n_1 \cdots n_{2k})
\end{equation}
in which $N_i \geq 1$, $\prod_i N_i < x$ and $2N_i \leq x^{1/k}$ if $i > k$.
\end{lemma}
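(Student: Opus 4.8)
\emph{Plan of proof.} This is Heath-Brown's classical identity, so the plan is to recover it by a short manipulation of Dirichlet convolutions and then split each variable dyadically. Write $*$ for Dirichlet convolution, $\mathbf{1}$ for the constant function $1$, and $\delta$ for the convolution unit, so that $\mathbf{1}*\mu=\delta$ and $\Lambda=\log*\mu$. Put $z=x^{1/k}$ and decompose $\mu=\mu_z+g_z$, where $\mu_z(n)=\mu(n)$ for $n\le z$ and $\mu_z(n)=0$ otherwise, so that $g_z(n)=\mu(n)$ for $n>z$ and $g_z(n)=0$ for $n\le z$. The point of departure is the identity
\[ \Lambda=\log*\mathbf{1}^{*(k-1)}*\mu^{*k}, \]
valid because $\mathbf{1}^{*(k-1)}*\mu^{*k}=(\mathbf{1}*\mu)^{*(k-1)}*\mu=\mu$; in the language of Dirichlet series this is nothing but $-\zeta'/\zeta=-\zeta'\,\zeta^{k-1}\,\zeta^{-k}$.

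First I would expand $g_z^{*k}=(\mu-\mu_z)^{*k}=\sum_{j=0}^{k}\binom{k}{j}(-1)^{j}\mu^{*(k-j)}*\mu_z^{*j}$ and convolve with $\log*\mathbf{1}^{*(k-1)}$. Using $\mathbf{1}*\mu=\delta$ one gets $\mathbf{1}^{*(k-1)}*\mu^{*(k-j)}=\mathbf{1}^{*(j-1)}$ for $1\le j\le k$, while the $j=0$ term is exactly $\Lambda$, whence
\[ \log*\mathbf{1}^{*(k-1)}*g_z^{*k}=\Lambda+\sum_{j=1}^{k}(-1)^{j}\binom{k}{j}\,\log*\mathbf{1}^{*(j-1)}*\mu_z^{*j}. \]
On the other hand, $g_z$ is supported on integers exceeding $z$, hence $g_z^{*k}$ is supported on integers exceeding $z^{k}=x$; since every divisor of an integer $\le x$ is itself $\le x$, the whole left-hand side vanishes at each integer $n\le x$. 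Comparing the two expressions yields, for all $n\le x$,
\[ \Lambda(n)=\sum_{j=1}^{k}(-1)^{j+1}\binom{k}{j}\bigl(\log*\mathbf{1}^{*(j-1)}*\mu_z^{*j}\bigr)(n), \]
which is Heath-Brown's identity. Writing out the convolutions, the $j$-th term is a sum over factorizations $n=n_1n_2\cdots n_j\,m_1\cdots m_j$ with $m_1,\dots,m_j\le z$, weighted by $(\log n_1)\,\mu(m_1)\cdots\mu(m_j)$.

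I would then multiply through by $f(n)$ and sum over $n\le x$. In the $j$-th term there are $j$ variables $n_1,\dots,n_j$ (with $\log n_1$ attached) and $j$ variables $m_1,\dots,m_j$, each at most $z=x^{1/k}$; inserting trivial factors $n_{j+1}=\cdots=n_k=1$, $m_{j+1}=\cdots=m_k=1$ and relabelling $m_\ell$ as $n_{k+\ell}$ puts every term into the shape of a sum over $2k$ variables $n_1,\dots,n_{2k}$ with $n_1\cdots n_{2k}\le x$, weighted by $\log n_1\,\mu(n_{k+1})\cdots\mu(n_{2k})$. Splitting each variable into a dyadic range $n_i\in[N_i,2N_i)$ with $N_i$ a power of $2$, the constraint $n_1\cdots n_{2k}\le x$ forces $N_i\le x$ for every $i$, so there are $\ll\mathcal{L}^{2k}$ admissible tuples $(N_1,\dots,N_{2k})$, each with $N_i\ge1$, $\prod_i N_i<x$ and $2N_i\le x^{1/k}$ for $i>k$ (as the M\"obius variables are $\le x^{1/k}$). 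Summing over the finitely many $j$, this presents $\sum_{n\le x}\Lambda(n)f(n)$ as $O(\mathcal{L}^{2k})$ sums of the asserted form.

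There is no real obstacle: the argument is purely formal followed by routine bookkeeping. The only steps that repay a moment's attention are the collapse $\mathbf{1}^{*(k-1)}*\mu^{*(k-j)}=\mathbf{1}^{*(j-1)}$, and the support fact that a $k$-fold convolution of a function vanishing on $[1,z]$ vanishes on $[1,z^{k}]$ --- which, with $z=x^{1/k}$, is exactly what forces the left-hand side to vanish for every $n\le x$ --- together with the (entirely mechanical) dyadic repackaging into the normalized form of the statement.
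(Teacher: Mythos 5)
The paper gives no proof here beyond a citation to Heath-Brown's article \cite{HB2}, and what you have written out is precisely the standard derivation found there: $\Lambda=\log*\mathbf{1}^{*(k-1)}*\mu^{*k}$, truncate $\mu=\mu_z+g_z$ at $z=x^{1/k}$, observe that $g_z^{*k}$ vanishes on $[1,z^k]=[1,x]$, collapse the binomial expansion via $\mathbf{1}*\mu=\delta$, and then pad with trivial variables and split dyadically. Your argument is correct and is the same route as the cited source; the only point worth a sentence of care in a final write-up is the dyadic boundary for the M\"obius variables (to literally achieve $2N_i\le x^{1/k}$ for $i>k$ one should let the $N_i$ be arbitrary reals $\ge1$ rather than powers of two, which the lemma's statement in fact permits), but this is routine bookkeeping and not a gap.
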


\begin{proof}
This is from \cite{HB2}.
\end{proof}

\begin{lemma} \label{notzerolem}
Suppose that \eqref{thetaccond} holds.  Let $x \geq C_3(C)$ and $q \leq x$.  For every primitive character $\chi \pmod{q}$, we have
\[ L(s,\chi) \neq 0, \; \mbox{if} \;  \left| \Im s \right| \leq x, \; \Re s > 1 - \logl^{-4/5} . \]
\end{lemma}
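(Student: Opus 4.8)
The plan is to deduce the statement from a Vinogradov--Korobov type zero-free region for $L(s,\chi)$ in which the modulus enters only through its squarefree kernel $\mathrm{rad}(q) := \prod_{p\mid q} p$, rather than through $q$ itself. For moduli built from prime powers such a region is available by combining Postnikov's description of a character to a prime-power modulus (which represents $\chi$ on a suitable subgroup by $e(F(\,\cdot\,)/p^{m})$ for a polynomial $F$) with Vinogradov's estimates for exponential sums with polynomial argument; this is the mechanism underlying the bounds of Iwaniec \cite{HI6} and Elliott's treatment \cite{El2} of the moduli $q=c^{n}$. Concretely, for every primitive character $\chi\pmod q$ with $q$ satisfying the second condition in \eqref{thetaccond}, one obtains, for $x$ large and an absolute constant $c>0$,
\[
L(s,\chi)\neq 0 \quad\text{for}\quad \Re s > 1 - \frac{c}{\log\bigl(\mathrm{rad}(q)+2\bigr) + \bigl(\log(|\Im s|+3)\bigr)^{2/3}\bigl(\log\log(|\Im s|+3)\bigr)^{1/3}},
\]
with the customary caveat of a possibly excluded exceptional real zero. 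Elliott's argument is phrased for $q=c^{n}$ but uses $q$ only through its (bounded) set of prime divisors, so it transfers to the class \eqref{thetaccond} with only notational changes.

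It remains to dispose of the caveat and to compare regions. First, if there were a real primitive character modulo $q$ then $q$ would be a fundamental discriminant, hence (a divisor of $8$ times) a squarefree number, so $q\leq 4\,\mathrm{rad}(q)\leq 4\mathcal{L}^{C}$; since $q=x^{\theta}$ with $\theta\geq\eta>0$, this is impossible for $x$ large. Thus for $x\geq C_3(C)$ every primitive character modulo $q$ is complex and no exceptional zero arises. Second, $\log(\mathrm{rad}(q)+2)\ll\log\mathcal{L}$, and for $|\Im s|\leq x$ we have $\log(|\Im s|+3)\leq 2\mathcal{L}$, so the denominator above is $\ll\mathcal{L}^{2/3}(\log\mathcal{L})^{1/3}$. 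Hence $L(s,\chi)\neq 0$ whenever $|\Im s|\leq x$ and $\Re s>1-c'\mathcal{L}^{-2/3}(\log\mathcal{L})^{-1/3}$ for a suitable $c'>0$. Since $\mathcal{L}^{-2/15}(\log\mathcal{L})^{1/3}\to 0$ as $x\to\infty$, we have $\mathcal{L}^{-4/5}\leq c'\mathcal{L}^{-2/3}(\log\mathcal{L})^{-1/3}$ for all sufficiently large $x$, so $\{\,\Re s>1-\mathcal{L}^{-4/5}\,\}$ lies inside the zero-free region; note that only $\theta\geq\eta$ and $\mathrm{rad}(q)\leq\mathcal{L}^{C}$ are used here.

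The one substantive ingredient is the first step: that the modulus-dependence in the zero-free region is through $\mathrm{rad}(q)$ and not $q$. A bare de la Vall\'ee Poussin or Vinogradov--Korobov bound carries a factor $\log q=\theta\mathcal{L}$ in the denominator, which would give only a zero-free region of width $\asymp\mathcal{L}^{-1}$ at height $x$, far too narrow to contain $\{\,\Re s>1-\mathcal{L}^{-4/5}\,\}$. The improvement is supplied entirely by the prime-power (Postnikov) structure forced by $\prod_{p\mid q}p\leq(\log x)^{C}$; everything else is the elementary bookkeeping above together with the standard fact that bounded-radical moduli support no real primitive character, which is what makes $4/5$ (indeed any exponent strictly between $2/3$ and $1$) admissible.
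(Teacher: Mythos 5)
Your first step — a zero-free region whose modulus dependence is through $\mathrm{rad}(q)$ rather than $q$, furnished by the Postnikov/Iwaniec machinery — is essentially what the paper uses (via \cite[Theorem~2]{HI6}, which even gives the weaker but sufficient width $\asymp(\mathcal{L}\log\mathcal{L})^{-3/4}$, still larger than $\mathcal{L}^{-4/5}$). The gap is in your treatment of the exceptional real zero. You dismiss it by arguing that a real primitive character modulo $q$ would force $q\leq 8\,\mathrm{rad}(q)\leq 8\mathcal{L}^{C}$, and that this is incompatible with $q=x^{\theta}$, $\theta\geq\eta>0$. But \eqref{thetaccond} gives only the \emph{upper} bound $\theta\leq 5/12-\eta$; there is no lower bound on $\theta$, so $q$ may be small (even $q=1$), and real primitive characters mod $q$ are not excluded. (Moreover, in the application of this lemma inside the proof of Theorem~\ref{bomvino} it is invoked for $\hat{\chi}_1$ of conductor $q_1d_1'$ with $q_1\mid q$ and $d_1'<\mathcal{L}^{2A+C_4}$, so small conductors genuinely occur.) Your parenthetical remark ``note that only $\theta\geq\eta$ and $\mathrm{rad}(q)\leq\mathcal{L}^{C}$ are used here'' invokes a hypothesis the lemma does not grant, and the argument collapses at precisely the point that is the crux of the statement.

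What the paper does instead: if an exceptional pair $(\chi,\rho)$ exists in Iwaniec's region, then $\chi$ is real and $\rho$ is real; the fact that $q/(q,8)$ is squarefree is used exactly as you do, but in the opposite direction — to deduce $q\leq 8\,\mathrm{rad}(q)\leq 8\mathcal{L}^{C}$, i.e.\ the conductor is $(\log x)^{O(1)}$. Siegel's theorem with exponent $1/(2C)$ then gives $\rho\leq 1-C_4(C)/q^{1/(2C)}\leq 1-c'(C)\mathcal{L}^{-1/2}$, which places the would-be Siegel zero outside $\{\Re s>1-\mathcal{L}^{-4/5}\}$ since $1/2<4/5$. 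So the exceptional zero is not shown not to exist; it is shown to be too far from $s=1$ to matter, at the (accepted) cost of an ineffective constant $C_3(C)$. This Siegel-theorem step is exactly the ingredient your proposal is missing, and it is the reason the condition $\mathrm{rad}(q)\leq(\log x)^{C}$ — not any lower bound on $\theta$ — drives the lemma.
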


\begin{proof}
This is well-known for $q=1$ and $\chi =1$.  Suppose that $q >1$.  Let
\[ d = \prod_{p | q} p, \; l = \log q (x+3) , \; \theta = \frac{1}{4 \cdot 10^4 \left(  \log d + ( l \log 2l )^{3/4} \right)} \geq \logl^{-4/5} . \]
According to \cite[Theorem 2]{HI6}, there is at most one primitive character $\chi \pmod{q}$ such that there is $\rho$ with
\[ \Re \rho > 1- \theta, \; \left| \Im \rho \right| \leq x, \; \mbox{and} \; L (\rho, \chi) = 0. \]
We suppose if possible that $\chi$ exists.  In this case, from \cite[Theorem 2]{HI6}, $\chi$ is real and $\rho$ is real.  From \cite[Page 40]{HD}, $q/(q,8)$ is squarefree, so that $d \geq q/8$; and from \cite[page 126]{HD},
\[ \rho \leq 1- \frac{C_4(C)}{q^{1/(2C)}} \leq 1 - \frac{C_4(C)8^{1/(2C)}}{d^{1/(2C)}} \leq 1 - \frac{C_4(C) 8^{1/(2C)}}{\logl^{1/2}} . \]
Therefore,
\[ \frac{\logl^{1/2}}{C_4(C) 8^{1/(2C)}} \geq \logl^{4/5} , \]
which is absurd.  This completes the proof of the lemma.
\end{proof}

\section{Proof of Theorem~\ref{bomvino}}

Let $(d,q)=1$.  Given a character $\chi \pmod{qd}$ induced by $\hat{\chi}$, let $\chi^{\dagger}$ denote the character $\pmod{qd_1(\chi)}$ induced by $\hat{\chi}$. \newline

Note that $\chi^{\dagger}$ shares with $\hat{\chi}$ the property
\[ \left| \psi(y, \chi^{\dagger}) \right| = \left| \psi (y, \chi) \right| + O \left( \logl^2 \right) , \; (1 \leq y \leq x) . \]
This is a consequence of Lemma~\ref{charredu} (ii). \newline

Recalling Lemma~\ref{psitoR}, in order to prove Theorem~\ref{bomvino}, it remains to show that
\[ \sum_{\substack{D \leq d \leq 2D \\ (d,q)=1}} \left| R(x; qd, a(d)) - \frac{x}{\varphi(qd)} \right| \ll \frac{x}{\varphi(q) \logl^{2A+3}} \]
whenever $1 \leq D \ll X^{L(\theta)}$, for any sequence $a(d)$ with $(a(d), dq)=1$. \newline

Now
\begin{equation} \label{replaceint}
R(x;qd,a(d)) - \frac{x}{\varphi(qd)} = \frac{1}{\varphi(qd)} \sideset{}{^{\prime}}\sum_{\chi \bmod{qd}} \bar{\chi} (a(d)) \int\limits_1^x \psi(y,\chi) \frac{\dif y}{y} + O \left( \frac{x}{\varphi(qd) \logl^{2A+4}} \right) .
\end{equation}
By replacing $\psi(y,\chi)$ by $\psi(y, \hat{\chi})$ or $\psi(y, \chi^{\dagger})$ in \eqref{replaceint}, we incur an error of size
\[ \ll \logl^3 \ll \frac{x}{\varphi(qd) \logl^{2A+4}} . \]
Therefore, it suffices to show for some absolute positive constant $C_4$ that
\begin{equation} \label{chidagineq}
\sum_{\substack{D \leq d \leq 2D \\ (d,q)=1}} \sideset{}{^{\prime}} \sum_{\substack{\chi \bmod{qd} \\ d_1(\chi) < \logl^{2A+C_4} }} \left| \int\limits_1^x \psi(y, \chi^{\dagger}) \frac{\dif y}{y} \right| \ll \frac{xD}{\logl^{2A+4}}  
\end{equation}
and that
\begin{equation} \label{chihatineq}
\sum_{\substack{D \leq d \leq 2D \\ (d,q)=1}} \sideset{}{^{\prime}} \sum_{\substack{\chi \bmod{qd} \\ d_1(\chi) \geq \logl^{2A+C_4} }} \left| \int\limits_1^x \psi(y, \hat{\chi}) \frac{\dif y}{y} \right| \ll \frac{xD}{\logl^{2A+4}}  
\end{equation}

We begin with \eqref{chidagineq}.  We observe that if $\chi^{\dagger} \pmod{qd_1}$ is given and $d$ is an integer divisible by $d_1$, then $\chi^{\dagger}$ determines $\chi \pmod{qd}$, since it is easy to see that
\[ \chi(n) = \left\{ \begin{array}{cl} \chi^{\dagger} (n) & \mbox{if} \; (n,qd)=1 \\ 0 & \mbox{if} \; (n,qd)>1. \end{array} \right. \]
Hence
\begin{equation*}
\begin{split} \sum_{\substack{D \leq d \leq 2D \\ (d,q)=1}} \sideset{}{^{\prime}} \sum_{\substack{\chi \bmod{qd} \\ d_1(\chi) < \logl^{2A+C_4} }} \left| \int\limits_1^x \psi(y, \chi^{\dagger}) \frac{\dif y}{y} \right| & \leq \sum_{\substack{d_1 < \logl^{2A+C_4} \\ (d_1,q)=1}} \ \sideset{}{^{\prime}} \sum_{\chi_1\bmod{qd_1}} \sum_{\substack{D \leq d < 2D \\ (d,q)=1 , d_1|d}} \left| \int\limits_1^x \psi(y, \chi_1) \frac{\dif y}{y} \right| \\
& \ll \frac{\logl^2D}{D_1} \sum_{\substack{D_1 < d_1 \leq 2D_1 \\ (d_1,q)=1}}  \ \sideset{}{^{\prime}} \sum_{\chi_1\bmod{qd_1}} \left| \psi(y_1, \chi_1) \right| ,
\end{split}
\end{equation*}
for some $D_1 \in [1, \logl^{2A+C_4} )$ and some $y_1 = y_1(\chi)$, $1 \leq y_1 \leq x$.  Thus we must show that
\[ \sum_{D_1 < d_1 \leq 2D_1} \ \sideset{}{^{\prime}} \sum_{\chi_1\bmod{qd_1}} \left| \psi(y_1, \chi_1) \right| \ll \frac{xD_1}{\logl^{2A+6}} . \]

Now $qD_1 < x^{1/2-\varepsilon}$.  In view of Lemma~\ref{elllem}, with $r$, $A$ replaced by $qd_1$, $2A+6$, it suffices to show that ($\beta + i \gamma$ denoting a zero of $L(s,\chi_1)$)
\[ \sideset{}{^{\prime}} \sum_{\chi_1\bmod{qd_1}} \sum_{\substack{\beta >1/2 \\ |\gamma| < x^{1/2}}} \frac{x^{\beta}\logl^{2A+7}}{|\beta+i \gamma|^2} \ll \frac{x}{\logl^{2A+6}}  \]
for each $d_1 < \logl^{2A+C_4}$.  Here, the left-hand side is
\begin{equation} \label{zerobound}
\ll \logl^{2A+8} x^{\sigma} \ \sideset{}{^{\prime}} \sum_{\chi_1\bmod{qd_1}} \sum_{\substack{\sigma \leq \beta \leq \sigma + \logl^{-1} \\ |\gamma|<x^{1/2}}} \frac{1}{|\beta + i \gamma|^2}
\end{equation}
for some $\sigma$, $1/2 \leq \sigma < 1$.  From Lemma~\ref{notzerolem}, applied to $\hat{\chi}_1$, with $C+2A+C_4$ in place of $C$, the sum in \eqref{zerobound} is empty if
\[ \sigma \geq 1 - \logl^{-4/5} . \]
Suppose now that $ \sigma < 1- \logl^{-4/5}$.  It suffices to show that
\[ S:=  \sideset{}{^{\prime}} \sum_{\chi_1\bmod{qd_1}} \sum_{\substack{\sigma \leq \beta < \sigma + \logl^{-1} \\ |\gamma|< x^{1/2}}} \frac{1}{|\beta + i \gamma|^2} \ll \frac{x^{1-\sigma}}{\logl^{4A+14}} . \]
Using Lemma~\ref{zerodensity},
\[ S \ll  \sideset{}{^{\prime}} \sum_{\chi_1\bmod{qd_1}} \sum_{j \geq 0} 2^{-2j} N (\sigma, 2^{j+1}, \chi_1) \ll \sum_{j \geq 0} 2^{-j/2} (qd_1)^{(12/5+\varepsilon)(1-\sigma)} \ll \logl^{3A+2C_4} x^{(12/5+\varepsilon)(5/12-\eta)(1-\sigma)} . \]
Therefore,
\[ Sx^{-(1-\sigma)} \logl^{4A+14} \ll x^{-(12\eta/5-\varepsilon)(1-\sigma)} \logl^{7A+2C_4+14} \ll \exp \left( - \left( \frac{12\eta}{5}-\varepsilon \right) \logl^{1/5} \right) \logl^{7A+2C_4+14}  \ll 1 . \]
This completes the proof of \eqref{chidagineq}. \newline

For \eqref{chihatineq}, we apply (ii) of Lemma~\ref{charprimredu}.  We need only show for $\logl^{2A+C_4} \leq D_1 \leq D$ that
\[ S(D_1) : = \sum_{q_1 | q} \ \sum_{\substack{D_1 < d \leq 2D_1 \\ (d, q) =1}} \ \sideset{}{^{\star}}\sum_{\chi \bmod{q_1d}} \left| \int\limits_1^x \psi (y, \chi) \frac{\dif y}{y} \right| \ll \frac{xD_1}{\mathcal{L}^{2A+5}} . \]

For brevity, we write $\sum^{\dagger}$ in place of
\[ \sum_{q_1 | q} \ \sum_{\substack{D_1 < d \leq 2D_1 \\ (d, q) =1}} \ \sideset{}{^{\star}}\sum_{\chi \bmod{q_1d}}  \; \; . \]

Recasting the absolute value signs as coefficients, we have
\[ S(D_1) = \sideset{}{^\dagger} \sum b(\chi) \int\limits_1^x \psi(y, \chi) \frac{\dif y}{y} = \sideset{}{^\dagger} \sum b(\chi) \sum_{n \leq x} \Lambda(n) \chi (n) \log \frac{x}{n} . \]
Now applying Lemma~\ref{HBID} with $k=7$ and
\[ f(n) = \sideset{}{^\dagger} \sum b(\chi) \chi(n) \log \frac{x}{n} , \]
we see that it suffices to show for each tuple $N_1$, $\cdots$, $N_{14}$ that
\[ \sideset{}{^\dagger} \sum b(\chi) \sum_{\substack{n_i \in (N_i, 2N_i] \\ n_1 \cdots n_{14} \leq x}} a_1 (n_1) \cdots a_{14} (n_{14}) \chi (n_1 \cdots n_{14}) \log \frac{x}{n_1 \cdots n_{14}} \ll \frac{xD_1}{\mathcal{L}^{2A+19}} . \]
Here the coefficients $a_j(n_j)$ are those resulting from the application of Lemma~\ref{HBID}; that is,
\[ a_1 (n) = \log n, \; a_j (n) = 1 \; \mbox{for} \; 2 \leq j \leq 7  \; \mbox{and} \; a_j(n) = \mu(n) \; \mbox{for} \; 8 \leq j \leq 14. \]
Using the formula
\[ \int\limits_{1/2-i \infty}^{1/2+i \infty} y^s \frac{\dif s}{s^2} = \left\{ \begin{array}{cl} \log y, & \mbox{if} \; y > 1 \\ 0, & \mbox{if} \; 0 < y \leq 1  \end{array} \right. \]
(cf. \cite[p. 143]{MonVau}), we need to show that
\[ \sideset{}{^\dagger} \sum b(\chi) \int\limits_{1/2-i \infty}^{1/2 + i \infty} \sum_{\substack{n_i \in (N_i, 2N_i] \\ n_1 \cdots n_{14} \leq x}} \frac{a_1 (n_1) \cdots a_{14} (n_{14}) \chi (n_1 \cdots n_{14})}{(n_1 \cdots n_{14})^s} \frac{x^s \dif s}{s^2} \ll \frac{x D_1}{\mathcal{L}^{2A+19}} . \]
Now the condition $n_1 \cdots n_{14} \leq x$ can be removed, since the integral vanishes otherwise.  We also use a trivial estimate to discard the part of the integral with $| \Im s | > x^2$.  Thus our task is further reduced to showing that
\[  \sideset{}{^\dagger} \sum b(\chi) \int\limits_{-x^2}^{x^2} N_1 \left( \frac{1}{2} + it, \chi \right) \cdots N_{14} \left( \frac{1}{2} + it, \chi \right) \frac{x^{1/2+it}}{(1/2 + it)^2} \dif t \ll \frac{xD_1}{\mathcal{L}^{2A+19}} , \]
where
\[ N_j (s, \chi) = \sum_{N_j < n \leq 2N_j} \frac{a_j(n) \chi(n)}{n^s} . \]
To this end, it suffices to prove that
\begin{equation} \label{redugoal}
\sideset{}{^\dagger} \sum \left| N_1 \left( \frac{1}{2} + it, \chi \right) \cdots N_{14} \left( \frac{1}{2} + it, \chi \right)  \right| \ll \frac{x^{1/2} D_1 (1+|t|)}{\mathcal{L}^{2A+20}} 
\end{equation}
for $|t| \leq x^2$.  It is convenient to recall here that $qD_1 \ll x^{1/2-\varepsilon}$ for all $\theta$ and $qD_1 \ll x^{9/20-\varepsilon}$ for $\theta \geq 2/5-\varepsilon$. \newline

Let us write $x_0 = \prod_{i=1}^{14} N_i$ and $N_i = x_0^{\alpha_i}$ so that $\alpha_i \geq 0$, $\alpha_1 + \cdots + \alpha_{14} =1$ and $x_0 \leq x$. \newline

For a Dirichlet polynomial
\[ N(s) = \sum_{N < n \leq zN} a_n \chi(n) n^{-s} \]
for some constant $z >1$, we use the abbreviation, for $p > 1$,
\[ \| N \|_p = \left( \sideset{}{^\dagger} \sum \left| N \left( \frac{1}{2} + it , \chi \right) \right|^p \right)^{1/p} \]
and
\[ \| N \|_{\infty} = \max \left\{ \left| N \left( \frac{1}{2} + it , \chi \right) \right| : \chi \: \mbox{appears in} \; \sideset{}{^\dagger} \sum \right\} . \]
Lemma~\ref{Diripoly4powerest}, possibly in conjunction with a partial summation to incorporate a $\log n$ factor, gives that
\begin{equation} \label{Njest1}
\| N_j \|_4^4 \ll q D_1^2 \mathcal{L}^9 (1+|t|) \ll D_1 x^{1/2-2\varepsilon/3} (1+|t|)
\end{equation} 
if $N_j > x^{1/6}$.  If $N_j \leq x^{1/6}$, we obtain similar bounds from Lemma~\ref{larsie}, applied to $T=N_j^2$.  Indeed, in this case,
\[ \| N_j \|_4^4 \ll ( N_j^2 + q D_1^2 ) \mathcal{L}^4 \ll \left\{ \begin{array}{ll} D_1 x^{1/2-2\varepsilon/3} & \mbox{in all cases} \\ qD_1^2 \logl^4 & \mbox{if} \; \theta \geq 1/3 . \end{array} \right. \]

From now on, it is convenient to arrange $N_1, \cdots , N_{14}$ so that
\[ N_1 \geq \cdots \geq N_{14} . \]
The proof of \eqref{redugoal} is divided into three cases.

\subsection*{Case 1.}  Suppose that $N_1 N_2 \geq x_0^{1/2}$.  Let $M= N_3 \cdots N_{14}$.  Then the left-hand side of \eqref{redugoal} is
\[ \| M N_1 N_2 \|_1 \leq \| M \|_2 \| N_1 \|_4 \| N_2 \|_4 \ll (M + qD_1^2)^{1/2} D_1^{1/2} x^{1/4-\varepsilon/4} (1+ |t|)^{1/2} \ll x^{1/4} D_1 x^{1/4 - \varepsilon/4} (1+|t|)^{1/2}.  \]
by H\"{o}lder's inequality, Lemma~\ref{larsie} and \eqref{Njest1}.  So \eqref{redugoal} holds in Case 1.

\subsection*{Case 2.} $N_1 N_2 < x_0^{1/2}$ and some sub-product $\prod_{i \in \mathcal{S}} N_i$ (with $\mathcal{S} \subseteq \{ 1, \cdots, 14 \}$) satisfies
\begin{equation} \label{subprodbound}
 x_0^{1/2} \leq N = \prod_{i \in \mathcal{S}} N_i < x^{1-\theta-\varepsilon}. 
 \end{equation}
 Hence 
 \[ M = \prod_{\substack{ 1 \leq i \leq 14 \\  i \not\in \mathcal{S}}} N_i \leq x_0^{1/2}. \]
 The left-hand side of \eqref{redugoal} is, using Lemma~\ref{larsie} and with $C_4$ suitably chosen,
 \[ \| M N \|_1 \leq \| M \|_2 \| N \|_2 \ll (M+qD_1^2)^{1/2} (N+qD_1^2)^{1/2} \mathcal{L}^{C_4/2} \ll (x_0^{1/2} + qD_1^2 + N^{1/2}q^{1/2} D_1) \mathcal{L}^{C_4/2} . \]
We clearly have $x_0^{1/2} \mathcal{L}^{C_4/2} \ll x^{1/2} D_1 \mathcal{L}^{-2A-20}$ as $D_1 \geq \mathcal{L}^{2A+C_4}$ and $qD_1^2 \mathcal{L}^{C_4/2} \ll x^{1/2} D_1 \mathcal{L}^{-2A-20}$.  Lastly, using \eqref{subprodbound},
\[ N^{1/2} q^{1/2} D_1 \mathcal{L}^{C_4/2} \ll x^{1/2-\varepsilon/2} D_1 \mathcal{L}^{C_4/2} \ll x^{1/2} D_1 \mathcal{L}^{-2A-20}. \]
So \eqref{redugoal} also holds in Case 2. \newline

We claim that if $\theta \leq 2/5-\varepsilon$, then Case 1 or Case 2 must occur.  Suppose not, then
\[ \alpha_1 + \alpha_2 < \frac{1}{2} \]
and there is no sub-sum with
\[ \frac{2}{5} \leq \sum_{i \in \mathcal{S}} \alpha_i \leq \frac{3}{5}. \]
One can easily verify that this is impossible.  See the details in Lemma 14 of \cite{Bak}. \newline

Now we suppose that $2/5-\varepsilon < \theta < 5/12$ and it still remains to consider

\subsection*{Case 3.} $N_1 N_2 < x_0^{1/2}$ and no sub-product $\prod_{i \in \mathcal{S}} N_i$ satisfies \eqref{subprodbound}.  Since $1-\theta - \varepsilon \geq 7/12$ (take $\varepsilon < \eta$), no sub-product $\prod_{i \in \mathcal{S}} N_i$ lies in $[ x_0^{5/12}, x_0^{7/12} ]$.  We start with a combinatorial lemma.

\begin{lemma} \label{comblem}
Suppose that $\alpha_1 \geq \cdots \geq \alpha_{14} \geq 0$, $\alpha_1 + \cdots + \alpha_{14} =1$, $\alpha_1 + \alpha_2 < 1/2$ and no sub-sum $\sum_{i \in \mathcal{S}} \alpha_i$ for a set $\mathcal{S} \subset \{ 1, \cdots , 14 \}$ is in $[5/12, 7/12]$.  Then $\alpha_5 > 1/6$ and
\begin{equation} \label{sumlistbound}
\alpha_1 + \alpha_2 + \alpha_6 + \alpha_7 + \cdots + \alpha_{14} < \frac{5}{12}.
\end{equation}
\end{lemma}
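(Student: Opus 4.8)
The plan rests on one elementary mechanism, which I will call \emph{greedy filling}: the forbidden interval $[5/12,7/12]$ has length exactly $1/6$, so if $c$ is a ``seed'' quantity with $0\le c<5/12$ and $\beta_1,\ldots,\beta_m$ are nonnegative numbers each of size $\le 1/6$ whose total satisfies $c+\beta_1+\cdots+\beta_m\ge 5/12$, then the partial sums $c,\ c+\beta_1,\ c+\beta_1+\beta_2,\ldots$ first reach $[5/12,7/12)$ at some step, because the increment that carries us to $5/12$ or beyond is $\le 1/6$ and is added to something $<5/12$. When $c$ is a subset sum of the $\alpha_i$ and the $\beta_j$ are among the remaining $\alpha_i$, this partial sum is a subset sum lying in $[5/12,7/12]$, contradicting the hypothesis. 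I would record this as a one-line observation and then invoke it three times.

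The preliminaries are immediate. From $\alpha_1+\alpha_2<1/2$ and the forbidden-interval hypothesis, $\alpha_1+\alpha_2$ cannot lie in $[5/12,1/2)$, so $\alpha_1+\alpha_2<5/12$; by monotonicity every singleton and every pair formed from $\alpha_1,\alpha_2,\alpha_3,\alpha_4$ is $\le\alpha_1+\alpha_2<5/12$. These will serve as seeds that start strictly below $5/12$.

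Now I prove $\alpha_5>1/6$ by contradiction. Assume $\alpha_5\le 1/6$, so that $\alpha_5,\alpha_6,\ldots,\alpha_{14}$ are all $\le 1/6$ (while $\alpha_4$ might still be larger). First, if in addition $\alpha_4\le 1/6$, apply greedy filling with seed $c=\alpha_1$ (which is $<5/12$) and numbers $\alpha_4,\alpha_5,\ldots,\alpha_{14}$ (all $\le 1/6$): their total brings $c$ up to $\alpha_1+(\alpha_4+\cdots+\alpha_{14})=1-\alpha_2-\alpha_3>1-5/12=7/12$ (using $\alpha_1+\cdots+\alpha_{14}=1$ and the pair bound $\alpha_2+\alpha_3<5/12$), a contradiction. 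Hence $\alpha_4>1/6$, and therefore $\alpha_1\ge\cdots\ge\alpha_4>1/6$. Second, apply greedy filling with seed $c=\alpha_3+\alpha_4$ (a pair, so $<5/12$) and numbers $\alpha_5,\ldots,\alpha_{14}$ (all $\le 1/6$): their total brings $c$ up to $(\alpha_3+\alpha_4)+(\alpha_5+\cdots+\alpha_{14})=1-\alpha_1-\alpha_2>7/12$, again a contradiction. Hence $\alpha_5>1/6$.

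The remaining inequality \eqref{sumlistbound} then comes for free. From $\alpha_1\ge\cdots\ge\alpha_5>1/6$ we get $\alpha_1+\cdots+\alpha_5>5/6$, so $\alpha_6+\cdots+\alpha_{14}<1/6$, whence $\alpha_1+\alpha_2+\alpha_6+\cdots+\alpha_{14}<(\alpha_1+\alpha_2)+1/6<5/12+1/6=7/12$; but the left-hand side is itself a subset sum, so it avoids $[5/12,7/12]$, and being $<7/12$ it must be $<5/12$. The only real obstacle is spotting the greedy-filling mechanism and picking the seed subsets $\{1\}$ and $\{3,4\}$ so that the running total starts below $5/12$ and is forced past $7/12$; the rest is routine bookkeeping, notably checking that each subset produced in the greedy step is genuinely contained in $\{1,\ldots,14\}$.
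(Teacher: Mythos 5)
Your proof is correct, and the core mechanism is the same one the paper uses: the forbidden interval has length $1/6$, so a running sum that starts below $5/12$ and is incremented by pieces of size $\le 1/6$ until it exceeds $5/12$ must pass through $[5/12,7/12)$, contradicting the hypothesis. The paper applies this mechanism once, showing directly that $\alpha_1+\alpha_2+\sum_{\alpha_i\le 1/6}\alpha_i<5/12$, and then does a short case analysis on the index $t$ at which the $\alpha_i$ drop to $\le 1/6$, concluding $t=5$ and reading off both claims at once. You instead apply the greedy step twice (with seeds $\{\alpha_1\}$ and $\{\alpha_3,\alpha_4\}$, after a case split on whether $\alpha_4\le 1/6$) to prove $\alpha_5>1/6$, and then recover \eqref{sumlistbound} by a separate trick: the target subset sum is $<7/12$, and since it avoids $[5/12,7/12]$ it must in fact be $<5/12$. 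That last observation is a neat shortcut not used in the paper, though it comes at the cost of one extra greedy application; the paper's single greedy step plus the $t$-case analysis is a bit more economical. (Incidentally, you correctly read $\alpha_1+\cdots+\alpha_{12}=1$ in the lemma statement as a typo for $\alpha_1+\cdots+\alpha_{14}=1$, which is what the surrounding setup and the paper's own proof use.)
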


\begin{proof}
Clearly $\alpha_1 + \alpha_2 < 5/12$.  Suppose that
\begin{equation} \label{missumbound}
\alpha_1 + \alpha_2 + \sum_{\alpha_i \leq 1/6} \alpha_i \geq \frac{5}{12} . 
\end{equation}
Let $s$ be the least sum $\alpha_1 + \alpha_2 + \sum_{i \in \mathcal{B}} \alpha_i$, for some set $\mathcal{B} \subset \{ i: \alpha_i \leq 1/6 \}$, that is greater than $5/12$.  This implies that $5/12 \leq s < 5/12+1/6=7/12$, contradicting one of the conditions of the lemma.  So \eqref{missumbound} must be false. \newline

We can write $\{ i : \alpha_i \leq 1/6 \}$ as $\{ i : i > t \}$ for some $t$ with $1 \leq t \leq 14$, since the $\alpha_i$'s are in descending order.  If $t \geq 6$, then 
\[ \alpha_1 + \cdots + \alpha_{14} \geq \alpha_1 + \cdots + \alpha_t > \frac{t}{6} \geq 1 \]
which is false.  If $t \leq 4$, then
\[ \alpha_1 + \cdots + \alpha_{14} \leq \left( \alpha_1 + \alpha_2 + \sum_{i > t} \alpha_i \right) + \left( \alpha_3 + \alpha_4 \right) < \frac{5}{12} + \frac{5}{12} < 1 \]
which is also false.  Therefore, $t=5$ and both claims of the lemma are proved.
\end{proof}

By Lemma~\ref{comblem}, in Case 3, we can partition $N_1 \cdots N_{14}$ into three parts $M$, $N$ and $N_5$,
\[ M(s, \chi) = N_1(s,\chi) N_2(s, \chi) \prod_{i \geq 6} N_i (s,\chi) = \sum_{M \leq m \ll M} \alpha_m \chi(m) m^{-s} \]
and
\[ N(s,\chi) = N_3 (s, \chi) N_4(s,\chi) = \sum_{N \leq n \ll N} \beta_n \chi(n) n^{-s} , \]
where $M < x_0^{5/12}$, $N < x_0^{5/12}$, $N_5 > x_0^{1/6}$, $M \geq N$.  So $MN_5 \geq x_0^{1/2}$. \newline

We need the stronger assertion that
\begin{equation} \label{N5bound}
 N_5 > x^{1/6-\varepsilon} .
\end{equation}
If this does not hold, then
\[ x_0^{1/2} \leq M N_5 < x_0^{5/12} x^{1/6-\varepsilon} < x^{1-\theta-\varepsilon} , \]
an impossibility in Case 3.  \newline

The utility of \eqref{N5bound} stems partly from the following lemma.
\begin{lemma} \label{burgess}
Let $\chi$ be a character modulo $q_1 d$ that appears in $\sum^{\dagger}$.  Then
\[ \sum_{k \leq K} \chi (k) \ll K^{1-\varepsilon/2} \]
whenever $K \geq x^{3/20}$.
\end{lemma}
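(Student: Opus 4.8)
\subsection*{Proof plan for Lemma~\ref{burgess}}

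The plan is to derive this directly from the classical Burgess estimate, exploiting the strong upper bound on the modulus of $\chi$ that is available in Case~3. Recall that $\chi$ is a primitive, non-principal character modulo $m := q_1 d$, where $q_1 \mid q$ and $D_1 < d \leq 2D_1 \leq 2D$. Since we are in Case~3 we have $\theta \geq 2/5 - \varepsilon$, so that $L(\theta) = 9/20 - \theta - \varepsilon$ and $D \ll x^{L(\theta)}$; hence
\[ m = q_1 d \leq q \cdot 2D_1 \ll x^{\theta}\,x^{9/20 - \theta - \varepsilon} = x^{9/20 - \varepsilon}, \]
which is just the bound $qD_1 \ll x^{9/20-\varepsilon}$ already recorded above.

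Next I would apply the Burgess bound with the exponent $r_0 = 3$, in the shape
\[ \sum_{k \leq K}\chi(k) \ll K^{2/3}\, m^{1/9}\,(\log m)^{1/3}. \]
Here it is essential that the $r_0 = 3$ case holds for \emph{arbitrary} moduli $m$, with no cube-free hypothesis (any residual $m^{o(1)}$ loss would in any case be harmless below): in our situation $q_1 d$ need not be cube-free, since \eqref{thetaccond} constrains only $\prod_{p\mid q}p$, not $q$ itself. Because $K \geq x^{3/20}$, i.e. $x \leq K^{20/3}$, the modulus bound gives $m \ll K^{(20/3)(9/20 - \varepsilon)} = K^{3 - 20\varepsilon/3}$, whence
\[ \sum_{k \leq K}\chi(k) \ll K^{2/3}\bigl(K^{3 - 20\varepsilon/3}\bigr)^{1/9}\logl^{1/3} \ll K^{1 - 20\varepsilon/27}\,\logl^{1/3}. \]

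Finally I would absorb the logarithm. As $\tfrac{20}{27} > \tfrac12$, write $K^{1 - 20\varepsilon/27} = K^{1 - \varepsilon/2}\,K^{-13\varepsilon/54}$; and since $K \geq x^{3/20}$ we have $\logl^{1/3} \ll (\log K)^{1/3}$, so $(\log K)^{1/3}K^{-13\varepsilon/54}$ is bounded (indeed $o(1)$). Therefore $\sum_{k \leq K}\chi(k) \ll K^{1 - \varepsilon/2}$, as required.

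There is no genuine obstacle here, but the estimate sits exactly on the Burgess boundary: for a modulus of size $x^{9/20}\asymp K^{3}$ the bound $K^{2/3}m^{1/9}$ would be merely $\asymp K$, so the argument survives only on the $x^{-\varepsilon}$ saving built into $L(\theta)$ — this is precisely where the restriction $\theta < 5/12$ enters — together with the fact that the $r_0 = 3$ Burgess estimate imposes no cube-free condition on $q_1 d$.
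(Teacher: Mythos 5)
Your argument is correct and follows essentially the same route as the paper: both invoke the $r=3$ Burgess estimate for the (not necessarily cube-free) modulus $q_1 d$ and combine $q_1 d \ll x^{9/20-\varepsilon}$ (valid because Case~3 occurs only for $\theta \geq 2/5 - \varepsilon$) with $K \geq x^{3/20}$ to beat the boundary by a power $K^{-c\varepsilon}$. The only cosmetic difference is that the paper writes Burgess's bound with an $m^{1/9+\varepsilon^2}$ factor rather than your $m^{1/9}(\log m)^{1/3}$, but as you observe any $m^{o(1)}$ loss is absorbed.
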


\begin{proof}
By a theorem of D. A. Burgess \cite{Bur}, we have
\[ \sum_{k \leq K} \chi(n) \ll (q_1 d)^{1/9 + \varepsilon^2} K^{2/3} \ll x^{(9/20 - \varepsilon)(1/9+\varepsilon^2)} K^{2/3} \ll K^{1-\varepsilon/2} , \]
completing the proof.
\end{proof}

The coefficients in $N_5$ cannot involve the M\"obius $\mu$-function.  Otherwise, from Lemma~\ref{HBID} and \eqref{N5bound}, we get
\[ x^{1/7} \geq 2N_5 > 2x^{1/6-\varepsilon} ,\]
which is false if $\varepsilon$ is sufficiently small.  Now it is easy to obtain
\begin{equation} \label{N5infnormest}
\| N_5 \|_{\infty} \ll N_5^{1/2} x^{-\varepsilon/13} (1+|t|)
\end{equation}
from Lemma~\ref{burgess}, \eqref{N5bound} and a partial summation argument.  \newline

The contribution in \eqref{redugoal} from $\chi$ with
\[ \min \left\{ \left| M \left( \frac{1}{2} + it , \chi \right) \right|, \left| N \left( \frac{1}{2} + it  , \chi \right) \right|,  \left| N_5 \left( \frac{1}{2} + it , \chi \right) \right| \right\} < x^{-1} \]
is clearly
\[ \ll \sideset{}{^\dagger} \sum 1 \ll x^{1/2 -\varepsilon} D_1 . \]
Therefore, by a splitting-up argument, it suffices to show, for any $U$, $V$ and $W$ with
\[ U \leq \| N_5 \|_{\infty}, \; \; V \leq \| M \|_{\infty} \; \; \mbox{and} \; \; W \leq \| N \|_{\infty} , \]
that
\[ UVW \# A(U,V,W) \ll (1 + |t|) x^{1/2} D_1 \mathcal{L}^{-2A-20} . \]
Here
\begin{equation*}
\begin{split}
 A(U,V,W) = \Bigg\{ \chi : \chi \; & \mbox{appears in} \;  \sideset{}{^\dagger}\sum, U < \left| N_5 \left( \frac{1}{2} + it, \chi \right) \right| \leq 2U,  \\
 & V < \left|  M \left( \frac{1}{2} + it, \chi \right) \right| \leq 2V, \; \;  W < \left| N \left( \frac{1}{2} + it, \chi \right) \right| \leq 2W \Bigg\} .
 \end{split}
 \end{equation*}
Now let 
\[ P = \min \left\{ \frac{M+qD_1^2}{V^2}, \frac{N+qD_1^2}{W^2} , \frac{qD_1^2}{U^4} (1+|t|), \frac{M}{V^2} + \frac{qD_1^2M}{V^6}, \frac{N}{W^2}+\frac{qD_1^2N}{W^6}, \frac{N_5^2}{U^4} + \frac{qD_1^2N_5^2}{U^{12}} \right\}. \]
It is a consequence of Lemmas~\ref{larsie}, \ref{huxlem} and the first inequality in \eqref{Njest1} that
\[  \# A(U,V,W) \ll P x^{\varepsilon/20} . \]
So it is enough to show that
\begin{equation} \label{newgoal}
UVWP \ll x^{1/2 - \varepsilon/13} D_1 (1+|t|) . 
\end{equation}
To do this, we consider four sub-cases, according to the size of $P$ in comparison with those of $2V^{-2}M$ and $2W^{-2} N$. \newline

\subsection*{(a) $P \leq 2 V^{-2} M$ and $P \leq 2W^{-2}N$} In this case, \eqref{N5infnormest} yields
\[ UVWP \ll UVW (V^{-2}M)^{1/2} (W^{-2}N)^{1/2} \ll (MN)^{1/2} \| N_5 \|_{\infty} \ll x^{1/2 - \varepsilon/13} (1+|t|) , \]
as desired for \eqref{newgoal}.

\subsection*{(b) $P > 2 V^{-2} M$ and $P > 2W^{-2}N$} Here, we have
\begin{eqnarray*}
P & \leq & 2 \min \bigg\{ qD_1^2 V^{-2}, qD_1^2 W^{-2}, qD_1^2MV^{-6}, qD_1^2 NW^{-6}, (1+|t|) qD_1^2 U^{-4}, N_5^2 U^{-4} \bigg\} \\
  & & + 2 \min \bigg\{ qD_1^2 V^{-2}, qD_1^2 W^{-2}, qD_1^2MV^{-6}, qD_1^2 NW^{-6}, (1+|t|) qD_1^2 U^{-4}, qD_1^2N_5^2 U^{-12} \bigg\} \\
  & \leq & 2(qD_1^2 V^{-2})^{5/16} (qD_1^2W^{-2})^{5/16} (qD_1^2MV^{-6})^{1/16} (qD_1^2NW^{-6})^{1/16} \left( \min \{ qD_1^2 U^{-4}, N_5^2 U^{-4} \} \right)^{1/4} (1+|t|)^{1/4} \\
  & & + 2 \min \bigg\{ (qD_1^2 V^{-2})^{5/16} (qD_1^2W^{-2})^{5/16} (qD_1^2MV^{-6})^{1/16} (qD_1^2NW^{-6})^{1/16} (qD_1^2U^{-4})^{1/4} (1+|t|)^{1/4}, \\
 & &  \hspace*{0.8in} (qD_1^2V^{-2})^{7/16} (qD_1^2 W^{-2})^{7/16} (qD_1^2MV^{-6})^{1/48} (qD_1^2NW^{-6})^{1/48} (qD_1^2N_5^2U^{-12})^{1/12} \bigg\} \\
  & \leq & 2 (1+|t|)^{1/4} (UVW)^{-1} qD_1^2 (MN)^{1/16} \left( \min \left\{ 1, (qD_1^2)^{-1/4} N_5^{1/2} \right\} + \min \left\{ 1, N_5^{1/6} (MN)^{-1/24} \right\} \right) \\
  & \ll & (1+|t|)^{1/4} (UVW)^{-1} \left( x^{1/16} (qD_1^2)^{31/32} + x^{1/20} qD_1^2 \right). 
\end{eqnarray*}
Now, noting that
\[ x^{1/16} (qD_1^2)^{31/32} \ll x^{1/16+31/32 \cdot 9/20} D_1^{31/32} \ll x^{1/2 - \varepsilon} D_1 \]
and
\[ x^{1/20} qD_1^2 \ll x^{1/20} x^{9/20-\varepsilon} D_1 \ll x^{1/2-\varepsilon} D_1, \]
we get that
\[ P \ll (1+|t|)^{1/4} (UVW)^{-1} x^{1/2-\varepsilon} D_1 , \]
which gives \eqref{newgoal}.

\subsection*{(c) $P > 2 V^{-2} M$ and $P \leq 2W^{-2}N$} Now we have
\begin{eqnarray*}
P & \leq & 2 \min \bigg\{ qD_1^2 V^{-2}, NW^{-2}, qD_1^2 MV^{-6}, qD_1^2 U^{-4} (1+|t|), N_5^2 U^{-4} \bigg\} \\
 & & + 2 \min \bigg\{ qD_1^2 V^{-2}, NW^{-2}, qD_1^2 MV^{-6}, qD_1^2 U^{-4} (1+|t|) , qD_1^2N_5^2 U^{-12} \bigg\} \\
 & \leq & 2 (qD_1^2V^{-2})^{1/8} (NW^{-2})^{1/2} (qD_1^2MV^{-6})^{1/8} \left( \min \left\{ qD_1^2 U^{-4}, N_5^2 U^{-4} \right\} \right)^{1/4} (1+|t|)^{1/4} \\
 & & + 2 \min \bigg\{ (qD_1^2V^{-2})^{1/8} (NW^{-2})^{1/2} (qD_1^2MV^{-6})^{1/8} (qD_1^2U^{-4})^{1/4} (1+|t|)^{1/4} , \\
 & & \hspace*{1in} (qD_1^2V^{-2})^{3/8} (NW^{-2})^{1/2} (qD_1^2MV^{-6})^{1/24} (qD_1^2N_5^2U^{-12})^{1/12} \bigg\} \\
 & \leq & 2 (1+|t|)^{1/4} (UVW)^{-1} (qD_1^2N)^{1/2} M^{1/8} \left( \min \bigg\{ 1, (qD_1^2)^{-1/4}N_5^{1/2} \bigg\} + \min \bigg\{ 1, N_5^{1/6}M^{-1/12} \bigg\} \right) \\
 & \ll & (1+|t|)^{1/4} (UVW)^{-1} \left( x^{1/8} (qD_1^2)^{7/16} N^{3/8} + x^{1/12} (qD_1^2)^{1/2} N^{5/12} \right) .
\end{eqnarray*}
To estimate these last two terms, we have
\[ x^{1/8}  (qD_1^2)^{7/16} N^{3/8} \ll x^{1/8}  (qD_1)^{7/16} D_1^{7/16} (x^{5/12})^{3/8} \ll x^{1/8+9/20 \cdot 7/16 + 5/12 \cdot 3/8} D_1^{7/16} \ll x^{1/2-\varepsilon} D_1  \]
and
\[ x^{1/12} (qD_1^2)^{1/2} N^{5/12} \ll x^{1/12} (qD_1)^{1/2} D_1^{1/2} x^{25/144} \ll x^{1/12 + 9/40 + 25/144} D_1^{1/2} \ll x^{1/2-\varepsilon} D_1 . \]
These bounds lead to
\[ P \ll (1+|t|)^{1/4} (UVW)^{-1} x^{1/2-\varepsilon} D_1 , \]
giving \eqref{newgoal}.

\subsection*{(d) $P > 2 W^{-2} N$ and $P \leq 2V^{-2}M$}  We proceed the same way as in subcase (c), interchanging the roles of $M$ and $N$. \newline

This completes the proof of Theorem~\ref{bomvino}.

\section{Proof of Theorem~\ref{bardavhal}}

From (iii) of Lemma~\ref{charredu}, we get
\[ \sum_{\substack{d \leq Q/q \\ (d,q)=1}} \sum_{\substack{a=1 \\ (a,dq)=1}}^{dq} \left( \psi(x; dq, a) - \frac{x}{\varphi(qd)} \right)^2 \ll \sum_{\substack{d \leq Q/q \\ (d,q)=1}} \frac{1}{\varphi(qd)} \sideset{}{^{\prime}} \sum_{\chi \bmod{dq}} \left| \psi (x, \chi) \right|^2 + \frac{x^2}{\varphi(q) \logl^{2A}} \sum_{d \leq Q/q} \frac{1}{\varphi(d)} . \]
As the second term is $ \ll Qx \varphi(q)^{-1}$, it suffices to prove that
\begin{equation} \label{BDHtrans1}
\sum_{\substack{d \leq Q/q \\ (d,q)=1}} \frac{1}{\varphi(qd)}  \sideset{}{^{\prime}} \sum_{\chi \bmod{dq}} \left| \psi ( x, \hat{\chi} ) \right|^2 \ll \frac{Q x \mathcal{L}}{\varphi(q)} 
\end{equation}
and that
\begin{equation} \label{BDHtrans2}
\sum_{\substack{d \leq Q/q \\ (d,q)=1}} \frac{1}{\varphi(qd)} \sideset{}{^{\prime}} \sum_{\chi \bmod{dq}} \left( \left| \psi(x, \hat{\chi}) \right|^2 - \left| \psi(x, \chi) \right|^2 \right) \ll \frac{Qx \logl}{\varphi(q)} .
\end{equation}
It is easy to see that, in \eqref{BDHtrans2},
\[ \left| \psi(x, \hat{\chi}) \right|^2 - \left| \psi(x, \chi) \right|^2 \ll \left( \sum_{p^k \leq x} \log p \right) \left( \sum_{p | dq} \log p \right) . \]
The contribution to \eqref{BDHtrans2} from $k \geq 2$ is
\[ \ll \sum_{\substack{d \leq Q/q \\ (d,q)=1}} x^{1/2+\varepsilon} \ll \frac{Qx^{1/2+\varepsilon}}{q} \]
which is acceptable.  The contribution from $k=1$ to \eqref{BDHtrans2} is
\[ \ll \sum_{\substack{ d \leq Q/q \\ (d,q)=1}} x \sum_{\substack{p \leq x \\ p | dq}} \log p \ll x \sum_{\substack{d \leq Q/q \\ (d,q)=1}} \sum_{p|q} \log p + x \sum_{p \leq x} \log p \sum_{\substack{d \leq Q/q \\ d \equiv 0 \bmod{p}}} 1 \ll \frac{xQ}{q} \log q + \frac{xQ}{q} \sum_{p \leq x} \frac{\log p}{p} \ll \frac{xQ \logl}{\varphi(q)}   \]
which is also acceptable.  (Incidentally, the error term corresponding to \eqref{BDHtrans2} is treated incorrectly on page 170 of \cite{HD}; the above discussion corrects this minor error.) \newline

It remains to prove \eqref{BDHtrans1} in the form
\begin{equation} \label{BDHtrans}
\sum_{q_1 | q} \sum_{\substack{d_1 \leq Q/q \\ (d_1, q)=1}} \sum_{\substack{d \leq Q/q \\ d_1 | d \\ (d,q)=1}} \frac{1}{\varphi(d)} \sideset{}{^{\star}} \sum_{\chi \bmod{d_1q_1}} \left| \psi( x, \chi) \right|^2 \ll Q x \logl .
\end{equation}
We split the sum over $d_1$ in \eqref{BDHtrans} into dyadic sub-sums of the form $\sum_{D < d_1 \leq 2D}$ where $D$ takes on the values $2^{-k} Q/q$, $k \geq 1$ and $2^{-k} Q/q > 1/2$.  Let $\Sigma_D$ denote the contribution to \eqref{BDHtrans} from a given $D$.  Hence
\[ \Sigma_D \ll \left( \log \frac{Q}{qD} \right) \sum_{\substack{ D < d_1 \leq 2D \\ (d_1,q)=1}} \frac{1}{\varphi(d_1)} \sum_{q_1 | q} \ \ \sideset{}{^{\star}} \sum_{\chi \bmod{d_1q_1}} \left| \psi ( x, \chi ) \right|^2 . \]
We first deal with the contributions from $D \leq \mathcal{L}^{2A}$:
\begin{equation*}
\begin{split}
 \sum_{D\leq \mathcal{L}^{2A}} \Sigma_D & \ll \mathcal{L} x \sum_{\substack{d_1 \leq 2\mathcal{L}^{2A} \\ (d_1, q) =1}} \frac{1}{\varphi(d_1)} \sum_{q_1 | q} \ \sideset{}{^{\star}} \sum_{\chi \bmod{d_1q_1}} \left| \psi ( x, \chi ) \right| \\
 & \ll \mathcal{L} x \sum_{\substack{d \leq 2\mathcal{L}^{2A} \\ (d,q)=1}} \frac{1}{\varphi(d)} \sideset{}{^{\prime}} \sum_{\chi \bmod{dq}} \left( |\psi (x, \chi)| + \logl^2 \right) \ll \frac{x^2}{\logl^A} + x \varphi(q) \logl^{2A+3} \ll Q x \logl,
 \end{split}
 \end{equation*}
where we have used (ii) of Lemma~\ref{charredu} and estimates occurring in the proof of \eqref{chidagineq}. \newline

Now for the remaining $D$'s with $D > \mathcal{L}^{2A}$, we use Lemma~\ref{larsie} and get
\[ \Sigma_D \ll \frac{1}{D} \log \frac{Q}{qD} \left( x + qD^2 \right) \sum_{n \leq x} \Lambda^2 (n) \ll \frac{x \mathcal{L}}{D}  \log \frac{Q}{qD} \left( x + qD^2 \right) . \]
Now we observe easily that
\[ \sum_{D > \mathcal{L}^{2A}} \frac{x^2 \mathcal{L}}{D} \log \frac{Q}{qD} \ll \frac{x^2}{\mathcal{L}^A} \ll Qx \]
and
\[ \sum_{D > \mathcal{L}^{2A}} qx \mathcal{L} D \log \frac{Q}{qD} \ll qx \mathcal{L} \sum_{k \geq 1} k \frac{Q}{q 2^k} \ll Qx \mathcal{L}. \]
This completes the proof of Theorem~\ref{bardavhal}.

\section{Proof of Theorem~\ref{maintheo}}

We say that a set $\mathcal{H} = \{ h_1, \cdots, h_k \}$ of distinct non-negative integers is {\it admissible} if for every prime $p$, there is an integer $a_p$ such that
\[ a_p \not\equiv h \pmod{p} \]
for all $h \in \mathcal{H}$. \newline

For a set of natural numbers $\mathcal{A}$, we write $X (\mathcal{A}; n)$ for the indicator function of $\mathcal{A}$.  For a smooth function $F$ supported on
\[ \mathcal{R}_k = \{ (x_1, \cdots, x_k) \in [0,1]^k : \sum_{i=1}^k x_i \leq 1 \} \]
and $1 \leq m \leq k$, let
\[ I_k (F) = \int\limits_0^1 \cdots \int\limits_0^1 F( t_1, \cdots, t_k ) ^2 \dif t_1 \cdots \dif t_k \]
and
\[ J^{(m)}_k (F) = \int\limits_0^1 \cdots \int\limits_0^1 \left( \int\limits_0^1 F( t_1, \cdots, t_k ) \dif t_m \right)^2 \dif t_1 \cdots \dif t_{m-1} \dif t_{m+1} \cdots \dif t_k . \]
Furthermore, set
\[ M_k = \sup_F \frac{\sum_{m=1}^k J_k^{(m)}(F)}{I_k (F)} , \]
where the supremum is taken over $F$ described above with $I_k (F) \neq 0$, $J^{(m)}_k (F) \neq 0$ for $m = 1, \cdots , k$.  It is shown in \cite{May} that
\[ M_k \geq \log k - 2 \log \log k + O(1) . \]
This bound is strengthened slightly in \cite{polym} to
\begin{equation} \label{Mklowerbd}
M_k \geq \log k + O(1) . 
\end{equation}

We now state a special case of \cite[Theorem 1]{BakZha1} for the integers $q$ and $a$ in the introduction.  Set
\[ D_0 = \frac{\log \log (x/2)}{\log \log \log (x/2)}. \]

\begin{lemma} \label{specmaylem}
Let $t$, $k$ be natural numbers and $L$ be a positive constant such that
\[ M_k > \frac{2t-2}{L}. \]
Let $\mathcal{H} = \{ h_1, \cdots, h_k \}$ be an admissible set with $h_1 < \cdots < h_k$, with $q| h_j $ for $j = 1, \cdots, k$.  Suppose that $p | h_i - h_j$ with $i \neq j$, $p > D_0$ implies $p | q$.  Let $x$ be large in terms of $k$ and
\[ \mathcal{A} = \left\{ n : \frac{x}{2} < n \leq x, n \equiv a \pmod{q} \right\} \; \; \; \mbox{and} \; \; \;\mathbb{P} = \{ p : p \in \mathcal{A} \} . \]
Set 
\[ Y = \frac{x}{2 q} \; \; \; \mbox{and} \; \; \; Y_1 = \frac{1}{\varphi(q)} \int\limits_{x/2}^x \frac{\dif t}{\log t} . \]
Suppose that
\begin{equation} \label{maycond1}
\sum_{\substack{d \leq x^L \\ (d,q)=1}} \mu^2 (d) \tau_{3k} (d) \left| \sum_{n \equiv b_d \bmod{qd}} X (\mathcal{A}; n) - \frac{Y}{d} \right| \ll \frac{Y}{\mathcal{L}^{k+\varepsilon}}
\end{equation}
for any $b_d \equiv a \pmod{q}$, and
\begin{equation} \label{maycond2}
\sum_{\substack{d \leq x^L \\ (d,q)=1}} \mu^2 (d) \tau_{3k} (d) \left| \sum_{n \equiv b_d \bmod{qd}} X ((\mathcal{A}+h_m) \cap \mathbb{P}; n) - \frac{Y_1}{\varphi(d)} \right| \ll \frac{Y}{\mathcal{L}^A}
\end{equation}
for every integer $b_d \equiv a \pmod{q}$ with $(b_d, q) =1$.  Then there are primes $p_1 < \cdots < p_t$ in $\mathcal{A}$ satisfying
\[ p_t - p_1 \leq h_k - h_1. \]
\end{lemma}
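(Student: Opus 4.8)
The statement is the specialization of \cite[Theorem 1]{BakZha1} to the data $(q,a)$ of the introduction, so I will only sketch how that theorem is proved. The plan is to run Maynard's multidimensional sieve with $\mathcal A$ as the host set. Fix a small $\delta>0$ and set $R=x^{L/2-\delta}$, $W_0=\prod_{p\le D_0}p$. Using the admissibility of $\mathcal H$ together with $(a,q)=1$ and $q\mid h_j$ for all $j$, choose a residue $\nu_0\bmod W_0$ with $\nu_0\equiv a\pmod{(q,W_0)}$ and $\nu_0\not\equiv -h_i\pmod p$ for all $i$ and all primes $p\le D_0$ with $p\nmid q$, and write $W=\operatorname{lcm}(q,W_0)$. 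For a smooth $F$ on $\mathcal R_k$, attach to each $n\in\mathcal A$ with $n\equiv\nu_0\pmod{W_0}$ the weight
\[ w_n=\Bigl(\sum_{d_i\mid n+h_i\ \forall i}\lambda_{d_1,\dots,d_k}\Bigr)^2,\qquad \lambda_{d_1,\dots,d_k}=\Bigl(\prod_{i=1}^k\mu(d_i)d_i\Bigr)\sum_{\substack{d_i\mid r_i\ \forall i\\ (r_1\cdots r_k,W)=1,\ \mu^2(r_1\cdots r_k)=1}}\frac{F\bigl(\tfrac{\log r_1}{\log R},\dots,\tfrac{\log r_k}{\log R}\bigr)}{\prod_i\varphi(r_i)}, \]
with $\lambda_{d_1,\dots,d_k}=0$ unless $d_1\cdots d_k<R$ is squarefree and coprime to $W$. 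The support forces each prime dividing some $d_i$ to exceed $D_0$ and to be prime to $q$, so the hypothesis ``$p\mid h_i-h_j$, $p>D_0$ $\Rightarrow$ $p\mid q$'' makes the $h_i$ pairwise distinct modulo every such prime, and the choice of $\nu_0$ removes the prime factors $\le D_0$, $p\nmid q$, of the $n+h_i$; both facts are needed for the local computations.

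I would then study
\[ S=\sum_{\substack{x/2<n\le x,\ n\in\mathcal A\\ n\equiv\nu_0\bmod W_0}}\Bigl(\sum_{i=1}^k X(\mathbb P;n+h_i)-(t-1)\Bigr)w_n=S_2-(t-1)S_1 ; \]
if $S>0$ then some $n$ counted with $w_n>0$ has at least $t$ of the numbers $n+h_1<\dots<n+h_k$ prime, all lying in $\mathcal A$ (since $q\mid h_i$), which yields primes $p_1<\dots<p_t$ in $\mathcal A$ with $p_t-p_1\le h_k-h_1$. Expanding $w_n$ and interchanging summations, $S_1$ becomes a sum over pairs of tuples $(d_i),(e_i)$ of $\lambda$-products times the number of admissible $n$ with $[d_i,e_i]\mid n+h_i$ for all $i$; this count is over a residue class modulo $q$ times a squarefree divisor $d<x^L$ coprime to $q$ (the lcm of $W_0$ and the $[d_i,e_i]$), each such $d$ arises from at most $\tau_{3k}(d)$ tuples, and replacing the count by $Y/d$ produces a total error $\ll\mathcal L^{O_k(1)}$ times the left side of \eqref{maycond1}. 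Similarly $S_2=\sum_{m=1}^k S_2^{(m)}$, where in $S_2^{(m)}$ only the terms with $d_m=e_m=1$ survive, one approximates the number of primes $n+h_m$ in the relevant class by $Y_1/\varphi(\cdot)$, and the error is $\ll\mathcal L^{O_k(1)}$ times the left side of \eqref{maycond2}. By \eqref{maycond1} and \eqref{maycond2} (with $A$ chosen large) both errors are $o(S_1)$.

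For the main terms I would carry out the standard diagonalisation: introduce $y_{r_1,\dots,r_k}$ by a M\"obius-type transform of the $\lambda$'s so that $y_{r_1,\dots,r_k}$ is essentially $F(\tfrac{\log r_1}{\log R},\dots,\tfrac{\log r_k}{\log R})$, after which the sums over integers coprime to $W$ evaluate via Euler products and powers of $\log R$ to
\[ S_1=(1+o(1))\,\kappa\,\frac{Y}{(\log R)^{k}}\,I_k(F),\qquad S_2=(1+o(1))\,\kappa'\,\frac{Y_1\log R}{(\log R)^{k}}\sum_{m=1}^k J_k^{(m)}(F), \]
for positive constants $\kappa,\kappa'$ (built from $\varphi(W)$, $q$, $W$) with $\kappa'/\kappa=(1+o(1))\varphi(q)/q$. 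Since $Y_1/Y=(1+o(1))\,q/(\varphi(q)\log x)$ and $\log R=(\tfrac L2-\delta)\log x$, the $q$- and $W$-dependent constants cancel in the ratio and
\[ S=(1+o(1))\,\kappa\,\frac{Y}{(\log R)^{k}}\Bigl(\bigl(\tfrac L2-\delta\bigr)\sum_{m=1}^k J_k^{(m)}(F)-(t-1)I_k(F)\Bigr). \]
Because $M_k>\tfrac{2t-2}{L}$, the definition of $M_k$ lets me pick an admissible $F$ with $\sum_m J_k^{(m)}(F)/I_k(F)$ close enough to $M_k$, and then $\delta$ small enough, to make the bracket positive; hence $S>0$ for all large $x$, which proves the lemma. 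The step I expect to give the most trouble is precisely this bookkeeping around the primes dividing $q$ and the $D_0$-smoothing: one must verify that the Euler factors at primes $p\mid q$ and the normalising factors $\varphi(W)^k/W^{k+1}$ genuinely cancel between $S_2$ and $S_1$ (so that $\prod_{p\mid q}p\le(\log x)^C$ is needed only to keep the error estimates and all the $o(1)$'s uniform in $q$), and that the divisor bound $\tau_{3k}(d)$, together with $L+\theta<1$, really does reduce the error terms in $S_1$ and $S_2$ to the hypotheses \eqref{maycond1} and \eqref{maycond2}.
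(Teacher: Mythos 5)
The paper gives no independent proof of this lemma: it is quoted verbatim as a special case of \cite[Theorem 1]{BakZha1}, exactly as you observe at the outset. Your sketch of the underlying Maynard-type argument (the $q$-adapted $W$-trick, the decomposition $S=S_2-(t-1)S_1$ with errors controlled by \eqref{maycond1} and \eqref{maycond2} via the divisor bound $\tau_{3k}$, and the ratio $S_2/S_1\approx(\log R/\log x)\sum_m J_k^{(m)}(F)/I_k(F)$ forcing positivity when $M_k>(2t-2)/L$) is an accurate outline of how that cited theorem is proved, so your route is essentially the same as the paper's.
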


\begin{proof} [Proof of Theorem~\ref{maintheo}]
We may suppose that $t$ is sufficiently large.  Suppose that $q$ satisfies \eqref{thetaccond}.  Let
\[ \mathcal{A} = \left\{ n \in \left( \frac{x}{2}, x \right] : n \equiv a \pmod{q} \right\}  \]
and $0 \leq h'_1 < \cdots < h'_k$ be an admissible set with
\[ h'_k \ll k \log k . \]
Then $\mathcal{H} = \{ h'_1 q , \cdots , h'_k q \}$ is an admissible set for which $p > D_0$, $p | h_i - h_j (i \neq j)$ implies $p | q$.  Further,
\[  h'_kq - h'_1 q \ll q k \log k . \]
Here we choose the least $k$ such that
\[ M_k > \frac{2t-2}{L(\theta) +\varepsilon/2} . \]
Mindful of \eqref{Mklowerbd}, we get
\[ \log k  \leq \frac{2t}{L ( \theta) + \varepsilon/2} + O(1) . \]
Choosing $\varepsilon$ sufficiently small, and recalling that $t$ is large, we have
\[ (h_k'-h_1') q \ll q \exp \left( \frac{2t}{L(\theta)} \right) . \]
It now remains to verify that the hypotheses of Lemma~\ref{specmaylem} are satisfied with $L = L (\theta)$. \newline

The bound \eqref{maycond1} presents no difficulty, as 
\[ \sum_{n \equiv b_d \bmod{dq}} X (\mathcal{A}; n) = \frac{Y}{d} + O(1) . \]
To verify \eqref{maycond2}, we observe that for $(d,q)=1$, $b_d \equiv a \pmod{q}$ and $(b_d, dq) =1$, 
\[ \sum_{n \equiv b_d \bmod{dq}} X ((\mathcal{A}+h_m) \cap \mathbb{P}; n) = \sum_{\substack{p \equiv b_d \bmod{dq} \\ x/2 + h_m < p \leq x }} 1  . \]

Let $A=10k^2$ and
\[ R_d = \left| \sum_{\substack{ p \equiv b_d \bmod{dq} \\ x/2+h_m < p \leq x}}  1 - \frac{Y_1}{\varphi(d)} \right| . \]
Let $L=L(\theta) + \varepsilon/2$.  We readily deduce from Theorem~\ref{bomvino}, with $\varepsilon/2$ in place of $\varepsilon$, that
\[ \sum_{\substack{d \leq x^L \\ (d,q)=1}} R_d \ll \frac{Y}{\logl^A}; \]
compare the argument at the end of \cite{El2}.  Hence the Cauchy-Schwarz inequality together with the Brun-Titchmarsh inequality gives
\begin{equation*}
\begin{split}
 \sum_{\substack{d \leq x^L \\ (d,q)=1}} \mu^2(d) \tau_{3k} (d) & R_d \leq \left( \sum_{\substack{d \leq x^L \\ (d,q)=1}} \mu^2(d) \tau_{3k}^2 (d) R_d \right)^{1/2} \left( \sum_{\substack{d \leq x^L \\ (d,q)=1}} R_d \right)^{1/2} \\
 & \ll Y \left( \sum_{d \leq x^L} \frac{\tau^2_{3k} (d)}{\varphi(d)} \right)^{1/2} \logl^{-A/2} \ll Y \logl^{(9k^2-A)/2} \ll Y \logl^{-(k+\varepsilon)} .
 \end{split}
 \end{equation*}

Now we may apply Lemma~\ref{specmaylem} and obtain primes $p_1 < \cdots < p_t$ in $\mathcal{A}$ with
\[ p_t - p_1 \leq (h'_k - h'_1)q \ll q \exp \left( \frac{2t}{L(\theta)} \right) .\]
This completes the proof of Theorem~\ref{maintheo}.
\end{proof}

\vspace*{.5cm}

\noindent{\bf Acknowledgments.}  We would like to thank Tristan Freiberg for pointing out a significant defect in a previous version of this paper.  This work was done while L. Z. held a visiting position in the Department of Mathematics of Brigham Young University (BYU).    He wishes to thank the warm hospitality of BYU during his thoroughly enjoyable stay in Provo.

\bibliography{biblio}

\vspace*{.5cm}

\noindent\begin{tabular}{p{8cm}p{8cm}}
Roger C. Baker & Liangyi Zhao \\
Department of Mathematics & School of Mathematics and Statistics \\
Brigham Young University & University of New South Wales \\
Provo, UT 84602 USA & Sydney, NSW 2052 Australia \\
Email: {\tt baker@math.byu.edu} & Email: {\tt l.zhao@unsw.edu.au} \\
\end{tabular}

\end{document}